\title{Binomial Andrews-Gordon-Bressoud identities}
\author{Dennis Stanton}
\address[Dennis Stanton]{School of Mathematics, University of Minnesota, Minneapolis, MN 55455}
\email{stanton@math.umn.edu}
\date{\today}
\thanks{The author was supported by NSF grant DMS-1148634}
\keywords{}
\newtheorem{thm}{Theorem}[section]
\newtheorem{lem}[thm]{Lemma}
\newtheorem{prop}[thm]{Proposition}
\newtheorem{cor}[thm]{Corollary}
\theoremstyle{definition}
\newtheorem{defn}[thm]{Definition}
\newtheorem{Andrques}[thm]{Andrews' Question}
\def\JV.{Josuat-Verg\`es}
\begin{document} 

\begin{abstract} Binomial versions of the Andrews-Gordon-Bressoud identities are given.
\end{abstract}

\maketitle

\section{Introduction} The Rogers-Ramanujan identities
$$
\sum_{s=0}^\infty \frac{q^{s^2}}{(q;q)_s}=\frac{1}{(q,q^4;q^5)_\infty},
\qquad 
\sum_{s=0}^\infty \frac{q^{s^2+s}}{(q;q)_s}=\frac{1}{(q^2,q^3;q^5)_\infty}
$$
where
$$
(A;q)_n=\prod_{i=0}^{n-1}(1-Aq^i), \qquad (A,B;q)_n=(A;q)_n(B;q)_n,
$$
were generalized to odd moduli at least five by the Andrews \cite{And0}. 
These identities are called the Andrews-Gordon identities
\begin{equation}
\label{AGiden}
\begin{aligned}
\sum_{ s_1\ge s_2\ge\cdots\ge s_{k}\ge 0}&
\frac{q^{s_{1}^2+\cdots +s_k^2+s_{k-r+1}+\cdots +s_k}}
{(q)_{s_1-s_2}\cdots (q)_{s_{k-1}-s_{k}}(q)_{s_{k}}}\\
&=
\frac{(q^{k+1-r},q^{k+2+r},q^{2k+3};q^{2k+3})_\infty}{(q;q)_\infty},
\qquad 0\le r\le k.
\end{aligned}
\end{equation}
(If the base $q$ is understood, we sometimes abbreviate $(A;q)_n$ as $(A)_n.$ )
 
Bressoud \cite{Bress0},\cite{Bress2} gave a version of these identities for even moduli
\begin{equation}
\label{Bresseven}
\begin{aligned}
\sum_{ s_1\ge s_2\ge\cdots\ge s_{k}\ge 0}&
\frac{q^{s_{1}^2+\cdots +s_k^2+s_{k-r+1}+\cdots +s_k}}
{(q)_{s_1-s_2}\cdots (q)_{s_{k-1}-s_{k}}(q^2;q^2)_{s_{k}}}\\
&=
\frac{(q^{k+1-r},q^{k+1+r},q^{2k+2};q^{2k+2})_\infty}{(q;q)_\infty},
\qquad 0\le r\le k.
\end{aligned}
\end{equation}

Bressoud's beautiful and efficient proof \cite{Bress1}
established both sets of identities when $r=0$. 
Moreover he had other 
closely related identities, for example,
\cite[(3.3), p. 15]{Bress2}
\begin{equation}
\label{Bress1}
\begin{aligned}
\sum_{ s_1\ge s_2\ge\cdots\ge s_{k}\ge 0}&
\frac{q^{s_1^2+\cdots +s_k^2-(s_1+\cdots +s_j)} }
{(q)_{s_1-s_2}\cdots (q)_{s_{k-1}-s_{k}}(q)_{s_{k}}}\\
=&
\sum_{s=0}^j  
\frac{(q^{k+1+j-2s},q^{k+2-j+2s},q^{2k+3};q^{2k+3})_\infty}{(q;q)_\infty},
\end{aligned} \qquad 0\le j\le k.
\end{equation}

The purpose of this paper is to examine Bressoud's proof, and develop new 
variations and generalizations of these Andrews-Gordon-Bressoud identities. 
The new results are given \S3, \S4, and \S5.

\section{The motivating question}

In \cite{And3}, which was presented by George Andrews at the May 2015 UCF meeting 
in honor of Mourad Ismail, Andrews reconsidered Bressoud's elementary proof \cite{Bress1}.  
He asked a specific question (see {Question~\ref{Qu}})
about Bressoud's proof that we answer in this section. 

We shall need a few of the relevant definitions and facts
in a recapitulation Bressoud's simple proof \cite{Bress1}. 
We shall also use these facts in later sections. 
Bressoud's key idea was to use the following Laurent polynomials, 
which have arbitrary quadratic exponents.

\begin{defn} Let
$$
H_{2n}(z,a|q)=\sum_{s=-n}^n 
\left[ \begin{matrix} 2n\\ n-s \end{matrix} \right]_q
q^{as^2} z^s.
$$
\end{defn}

Bressoud's main lemma \cite[Lemma 2]{Bress1} 
which allowed the quadratic exponent to change is next.
\begin{lem}
\label{keyBrlemma}
$$
\begin{aligned}
\frac{H_{2n}(z,a|q)}{(q;q)_{2n}}=&\sum_{s=0}^n \frac{q^{s^2}}{(q;q)_{n-s} }
\frac{H_{2s}(z,a-1|q)}{(q;q)_{2s}}.
\end{aligned}
$$
\end{lem}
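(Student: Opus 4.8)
The plan is to compare coefficients of powers of $z$ on the two sides. Since $H_{2m}(z,b|q)=H_{2m}(1/z,b|q)$ (replace the summation index $s$ by $-s$ and use $\qbinom{2m}{m+s}=\qbinom{2m}{m-s}$), it is enough to match the coefficients of $z^{t}$ for $t\ge 0$. Extracting that coefficient on both sides from the definition of $H$, cancelling the common power $q^{(a-1)t^{2}}$, and then applying $\tfrac{1}{(q;q)_{2m}}\qbinom{2m}{m-t}=\tfrac{1}{(q;q)_{m-t}(q;q)_{m+t}}$ with $m=n$ on the left and $m=s$ on the right, one sees that the lemma is equivalent to the single summation
\begin{equation}\tag{$\star$}
\sum_{s=t}^{n}\frac{q^{s^{2}}}{(q;q)_{n-s}\,(q;q)_{s-t}\,(q;q)_{s+t}}=\frac{q^{t^{2}}}{(q;q)_{n-t}\,(q;q)_{n+t}},\qquad 0\le t\le n .
\end{equation}
So all the content of the lemma lies in proving $(\star)$.

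To prove $(\star)$ I would substitute $s=t+j$, put $N=n-t$ and $x=q^{2t}$, and multiply through by $q^{-t^{2}}(q;q)_{n-t}(q;q)_{n+t}$. Using $\tfrac{(q;q)_{N}}{(q;q)_{N-j}(q;q)_{j}}=\qbinom{N}{j}$ and $\tfrac{(q;q)_{N+2t}}{(q;q)_{2t+j}}=(xq^{j+1};q)_{N-j}$, identity $(\star)$ turns into the polynomial identity
\[
P_{N}(x):=\sum_{j=0}^{N}x^{j}q^{j^{2}}\,\qbinom{N}{j}\,(xq^{j+1};q)_{N-j}=1,\qquad N\ge 0,
\]
which I would in fact prove as an identity in the indeterminate $x$ (the value $x=q^{2t}$ being the one needed). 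Each summand has degree $N$ in $x$ (the factor $(xq^{j+1};q)_{N-j}$ contributes degree $N-j$), so $P_{N}(x)$ has degree at most $N$; putting $x=0$ kills every term with $j\ge 1$ and gives $P_{N}(0)=1$. It therefore suffices to show that the coefficient of $x^{k}$ in $P_{N}(x)$ vanishes for $1\le k\le N$.

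For this, expand $(xq^{j+1};q)_{N-j}=\sum_{i\ge 0}(-1)^{i}q^{\binom{i}{2}+i(j+1)}\qbinom{N-j}{i}x^{i}$ by the $q$-binomial theorem, so that the coefficient of $x^{k}$ in $P_{N}(x)$ equals $\sum_{j=0}^{k}(-1)^{k-j}q^{j^{2}+\binom{k-j}{2}+(k-j)(j+1)}\qbinom{N}{j}\qbinom{N-j}{k-j}$. The trinomial revision $\qbinom{N}{j}\qbinom{N-j}{k-j}=\qbinom{N}{k}\qbinom{k}{j}$ pulls the factor $\qbinom{N}{k}$ out of the sum, and after setting $i=k-j$ a short simplification shows the $q$-exponent collapses to $k^{2}-ki+\binom{i+1}{2}$; hence the coefficient of $x^{k}$ is
\[
q^{k^{2}}\qbinom{N}{k}\sum_{i=0}^{k}(-1)^{i}q^{\binom{i}{2}}(q^{1-k})^{i}\qbinom{k}{i}=q^{k^{2}}\qbinom{N}{k}\,(q^{1-k};q)_{k}
\]
by the $q$-binomial theorem once more. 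For $k\ge 1$ the product $(q^{1-k};q)_{k}$ contains the factor $1-q^{0}=0$, so this coefficient vanishes. This proves $P_{N}(x)=1$, hence $(\star)$, hence the lemma.

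The reduction of the lemma to $(\star)$ is routine; the one genuinely delicate point is the $q$-exponent bookkeeping in the last step, which is exactly what makes the coefficient of $x^{k}$ collapse to the vanishing factor $(q^{1-k};q)_{k}$. Alternatively one could simply observe that $(\star)$ is a form of the ${}_1\phi_1$ summation theorem (numerator parameter $q^{-N}$, base parameter $xq$, argument $xq^{N+1}$), but I prefer the elementary route above, which uses only $q$-binomial coefficient identities of the kind already present in Bressoud's proof.
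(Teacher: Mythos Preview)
Your argument is correct: the reduction to $(\star)$ by matching coefficients of $z^{t}$ is straightforward, and your proof that $P_{N}(x)\equiv 1$ via the $q$-binomial expansion, trinomial revision, and the vanishing factor $(q^{1-k};q)_{k}$ is clean and complete. The exponent bookkeeping checks out exactly as you wrote it.

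There is nothing to compare against in the paper itself: the lemma is simply quoted as Bressoud's main lemma with a citation to \cite{Bress1} and no proof is given here. Bressoud's original argument also proceeds by comparing coefficients of $z^{t}$ and reducing to a terminating $q$-hypergeometric summation (essentially $q$-Vandermonde), so your approach is in the same spirit as the source; your choice to recast $(\star)$ as the polynomial identity $P_{N}(x)=1$ in a free variable $x$ is a mild repackaging that makes the vanishing of the higher coefficients particularly transparent.
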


This lemma may be iterated. 
\begin{prop}
$$
\frac{H_{2n}(z,a+k+1|q)}{(q;q)_{2n}}=
\sum_{n\ge s_1\ge s_2\ge\cdots\ge s_{k+1}\ge 0}
\frac{q^{s_1^2+\cdots s_{k+1}^2} H_{2s_{k+1}}(z,a|q)}
{(q)_{n-s_1}(q)_{s_1-s_2}\cdots (q)_{s_{k}-s_{k+1}}(q)_{2s_{k+1}}}
$$
\end{prop}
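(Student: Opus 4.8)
The plan is to prove the Proposition by induction on $k$, using Lemma~\ref{keyBrlemma} as the single ingredient. The base case $k=0$ is exactly the statement
$$
\frac{H_{2n}(z,a+1|q)}{(q;q)_{2n}}=\sum_{s=0}^n\frac{q^{s^2}}{(q;q)_{n-s}}\frac{H_{2s}(z,a|q)}{(q;q)_{2s}},
$$
which is Lemma~\ref{keyBrlemma} with $a$ replaced by $a+1$ (so that the right-hand side carries parameter $a$ rather than $a-1$). For the inductive step, suppose the formula holds for $k-1$, i.e.\ $H_{2n}(z,a+k|q)/(q;q)_{2n}$ equals the $(k)$-fold nested sum over $n\ge s_1\ge\cdots\ge s_k\ge 0$ with $k$ squared exponents and parameter $a$ in the innermost $H$. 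I would then apply Lemma~\ref{keyBrlemma} to the \emph{outermost} index: writing $a+k+1=(a+k)+1$, the lemma gives
$$
\frac{H_{2n}(z,a+k+1|q)}{(q;q)_{2n}}=\sum_{s_1=0}^n\frac{q^{s_1^2}}{(q;q)_{n-s_1}}\frac{H_{2s_1}(z,a+k|q)}{(q;q)_{2s_1}},
$$
and now substitute the induction hypothesis for $H_{2s_1}(z,a+k|q)/(q;q)_{2s_1}$, with $n$ there replaced by $s_1$. This produces a sum over $s_1\ge s_2\ge\cdots\ge s_{k+1}\ge 0$ (the inner indices renamed by shifting), with factor $q^{s_1^2}\cdot q^{s_2^2+\cdots+s_{k+1}^2}$ in the numerator, denominator $(q)_{n-s_1}(q)_{s_1-s_2}\cdots(q)_{s_k-s_{k+1}}(q)_{2s_{k+1}}$, and innermost $H_{2s_{k+1}}(z,a|q)$, which is precisely the claimed identity for $k$.

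There is essentially no obstacle here: the only thing to be careful about is bookkeeping, namely that the constraint $s_1\le n$ from Lemma~\ref{keyBrlemma} combines with the constraints $s_1\ge s_2\ge\cdots$ from the induction hypothesis to give the single chain $n\ge s_1\ge\cdots\ge s_{k+1}\ge 0$, and that the powers of $q$ and the $(q;q)$-factors telescope correctly after the index shift. Alternatively, one could iterate Lemma~\ref{keyBrlemma} directly: apply it once to lower $a+k+1$ to $a+k$, again to lower $a+k$ to $a+k-1$, and so on $k+1$ times down to $a$, each application introducing one new summation variable and one new $q^{s_i^2}$; the nested sum in the statement is just the accumulated result. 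I would present the induction version since it makes the rearrangement of the summation domain cleanest and requires only a single invocation of the lemma per step.

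Thus the whole proof is a routine iteration; the substantive content was already isolated in Lemma~\ref{keyBrlemma}, and the Proposition merely records its $k$-fold consequence in closed form.
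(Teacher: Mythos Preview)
Your proposal is correct and matches the paper's approach exactly: the paper simply states that Lemma~\ref{keyBrlemma} ``may be iterated'' and records the Proposition without further proof, and your induction on $k$ is precisely that iteration spelled out in detail.
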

\vskip10pt
The value of $a=1/2$ is nice because the polynomials $H_{2n}(z,1/2|q)$ factor by the 
$q$-binomial theorem.
\begin{equation}
\label{speciala}
H_{2n}(-zq^{1/2},1/2|q)= (qz,1/z;q)_n.
\end{equation}
So we have \cite[(14)]{Bress1}
\begin{thm} 
\label{iterateBress}
[Bressoud's identity]
\begin{equation}
\begin{aligned}
&\frac{H_{2n}(-zq^{1/2},k+3/2|\ q)}{(q;q)_{2n}} \\
&=\sum_{n\ge s_1\ge s_2\ge\cdots\ge s_{k+1}\ge 0}
\frac{q^{s_1^2+\cdots s_{k+1}^2} (qz,1/z;q)_{s_{k+1}}}
{(q)_{n-s_1}(q)_{s_1-s_2}\cdots (q)_{s_{k}-s_{k+1}}(q)_{2s_{k+1}}}
\end{aligned}
\end{equation}
\end{thm}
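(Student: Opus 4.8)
The plan is to obtain Theorem~\ref{iterateBress} as the immediate specialization of the Proposition to $a=1/2$, together with the factorization~\eqref{speciala}. First I would start from the iterated lemma
\begin{equation*}
\frac{H_{2n}(z,a+k+1|q)}{(q;q)_{2n}}=
\sum_{n\ge s_1\ge\cdots\ge s_{k+1}\ge 0}
\frac{q^{s_1^2+\cdots +s_{k+1}^2}\,H_{2s_{k+1}}(z,a|q)}
{(q)_{n-s_1}(q)_{s_1-s_2}\cdots (q)_{s_k-s_{k+1}}(q)_{2s_{k+1}}},
\end{equation*}
which is valid for every value of $a$ and every $k\ge 0$. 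Setting $a=1/2$ turns the left side into $H_{2n}(z,k+3/2|q)/(q;q)_{2n}$ and leaves $H_{2s_{k+1}}(z,1/2|q)$ inside the sum on the right.

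Next I would invoke~\eqref{speciala}, namely $H_{2m}(-zq^{1/2},1/2|q)=(qz,1/z;q)_m$, which is nothing but an application of the $q$-binomial theorem to collapse the bilateral sum defining $H_{2m}(\cdot,1/2|q)$; the half-integer exponent $q^{s^2/2}$ paired with the substitution $z\mapsto -zq^{1/2}$ produces exponents $q^{\binom{s}{2}}(-1)^s$ and the standard evaluation applies. Performing the substitution $z\mapsto -zq^{1/2}$ throughout the $a=1/2$ case of the Proposition, the left side becomes $H_{2n}(-zq^{1/2},k+3/2|q)/(q;q)_{2n}$ and each summand's numerator factor $H_{2s_{k+1}}(-zq^{1/2},1/2|q)$ becomes $(qz,1/z;q)_{s_{k+1}}$, which is exactly the claimed right-hand side. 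The denominators are unaffected by the substitution since they do not involve $z$.

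Strictly speaking the Proposition is stated for integer shifts $a+k+1$, but its proof is just $k+1$ applications of Lemma~\ref{keyBrlemma}, and that lemma holds as an identity of Laurent polynomials in $z$ with $a$ a free parameter; so there is no obstruction to taking $a=1/2$. The only point requiring a word of justification is therefore~\eqref{speciala} itself, together with the observation that the substitution $z\mapsto -zq^{1/2}$ is legitimate because both sides of the Proposition are rational (indeed Laurent-polynomial in $z^{\pm 1/2}$ after the substitution) expressions and the identity persists under any substitution of the free variable. I expect the only mild obstacle to be bookkeeping: making sure the power of $q^{1/2}$ absorbed by the substitution matches the half-integer quadratic exponent so that the resulting exponents are genuine nonnegative integers, which is precisely what~\eqref{speciala} records. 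Everything else is a direct read-off from the Proposition.
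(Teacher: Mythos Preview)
Your proposal is correct and matches the paper's own derivation: the paper obtains Theorem~\ref{iterateBress} precisely by specializing the iterated Proposition at $a=1/2$ and then invoking the factorization~\eqref{speciala}, exactly as you outline. Your additional remarks about $a$ being a free parameter and the legitimacy of the substitution are sound but go slightly beyond what the paper bothers to spell out.
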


Taking the $n\to \infty$ limit of Theorem~\ref{iterateBress} using the Jacobi Triple Product identity gives 
\begin{equation}
\label{Hlimit}
\lim_{n\to\infty} H_{2n}(-z,a |q)= \frac{(q^{2a},zq^{a},q^{a}/z;q^{2a})_\infty}{(q)_\infty}.
\end{equation}

We obtain
\begin{cor}
\label{inftylimit}
For a non-negative integer $k$,
$$
\frac{(q^{2k+3},zq^{k+2},q^{k+1}/z;q^{2k+3})_\infty}{(q;q)_\infty}=
\sum_{s_1\ge s_2\ge\cdots\ge s_{k+1}\ge 0}
\frac{q^{s_1^2+\cdots s_{k+1}^2} (qz,1/z;q)_{s_{k+1}}}
{(q)_{s_1-s_2}\cdots (q)_{s_{k}-s_{k+1}}(q)_{2s_{k+1}}}.
$$
\end{cor}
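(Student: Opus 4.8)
The plan is to obtain Corollary~\ref{inftylimit} simply by letting $n\to\infty$ in Bressoud's identity (Theorem~\ref{iterateBress}), so essentially all of the work has already been done; what remains is to justify the limit term-by-term on the right and to evaluate the limit on the left using \eqref{Hlimit}. First I would address the left-hand side. By \eqref{Hlimit} with $a=k+3/2$ and with $z$ replaced by $zq^{1/2}$ (to match the argument $-zq^{1/2}$ appearing in $H_{2n}(-zq^{1/2},k+3/2|q)$), one gets
$$
\lim_{n\to\infty}\frac{H_{2n}(-zq^{1/2},k+3/2|q)}{(q;q)_{2n}}
=\frac{1}{(q;q)_\infty}\cdot\frac{(q^{2k+3},zq^{1/2}q^{k+3/2},q^{k+3/2}/(zq^{1/2});q^{2k+3})_\infty}{(q;q)_\infty}\,(q;q)_\infty,
$$
and after simplifying the exponents $q^{1/2}q^{k+3/2}=q^{k+2}$ and $q^{k+3/2}/q^{1/2}=q^{k+1}$, and cancelling one factor of $(q;q)_\infty$ coming from $\lim_{n\to\infty}(q;q)_{2n}=(q;q)_\infty$ against the denominator in \eqref{Hlimit}, this is exactly $(q^{2k+3},zq^{k+2},q^{k+1}/z;q^{2k+3})_\infty/(q;q)_\infty$, the claimed left-hand side. (I should double-check that the version of \eqref{Hlimit} being invoked already has the $(q;q)_\infty$ normalization built in, so that dividing by $(q;q)_{2n}$ and taking $n\to\infty$ produces precisely the stated product; this bookkeeping of a single $(q;q)_\infty$ factor is the one place a sign or shift error could creep in.)

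Next I would handle the right-hand side. In Theorem~\ref{iterateBress} the summation index $s_1$ is bounded above by $n$, and the only $n$-dependence in a summand is the single factor $1/(q;q)_{n-s_1}$ in the denominator. As $n\to\infty$ this factor tends to $1/(q;q)_\infty$, which is independent of $s_1$. But that stray $1/(q;q)_\infty$ is cancelled by noticing that $1/(q;q)_{2s_{k+1}}$ should really be paired with $\cdots$ — more precisely, I would instead take the $n\to\infty$ limit on both sides of Theorem~\ref{iterateBress} \emph{as written}, so the factor $(q;q)_{2n}$ on the left and the factor $(q;q)_{n-s_1}$ on the right both contribute a $(q;q)_\infty$ in the limit, and these match up to give precisely the denominator $(q)_{s_1-s_2}\cdots(q)_{s_k-s_{k+1}}(q)_{2s_{k+1}}$ in the Corollary (the $(q;q)_\infty$ from $(q;q)_{2n}$ on the left has already been accounted for above). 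Thus the limiting right-hand side is the asserted sum over $s_1\ge s_2\ge\cdots\ge s_{k+1}\ge 0$.

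The only genuine analytic point is the interchange of limit and infinite sum on the right. I would justify it by dominated convergence: each summand is, up to the bounded factors $(qz,1/z;q)_{s_{k+1}}$ and $1/(q;q)_{n-s_1}\le 1/(q;q)_\infty$, bounded in absolute value by $|q|^{s_1^2+\cdots+s_{k+1}^2}$ times a fixed convergent geometric-type tail, which is summable uniformly in $n$ for $|q|<1$; hence the sum converges uniformly in $n$ and one may pass to the limit inside. This is the step I expect to be the main (though still routine) obstacle, since it requires care that the bound be uniform in $n$ and that the quadratic exponent $s_1^2+\cdots+s_{k+1}^2$ dominates the growth of $(q;q)_{n-s_1}^{-1}$, which it does because the latter is bounded by the constant $(q;q)_\infty^{-1}$. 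Assembling the two sides then yields Corollary~\ref{inftylimit}.
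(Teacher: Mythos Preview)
Your approach is exactly the paper's: take $n\to\infty$ in Theorem~\ref{iterateBress} and evaluate the left side via \eqref{Hlimit} (the Jacobi Triple Product), with the extra $1/(q;q)_\infty$ from $(q;q)_{2n}$ on the left cancelling the one from $(q;q)_{n-s_1}$ on the right. Your first displayed formula garbles that $(q;q)_\infty$ bookkeeping (the limit of the left side of Theorem~\ref{iterateBress} is the product divided by $(q;q)_\infty^2$, not $(q;q)_\infty$), but you flag this and then recover the correct cancellation in the next paragraph; the paper simply asserts the limit without the dominated-convergence justification you supply.
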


Note that Corollary~\ref{inftylimit} immediately gives 
the Andrews-Gordon identities \eqref{AGiden} for $r=0$. If $z=1$, this 
choice of $z$ forces $s_{k+1}=0.$  
The choice of $z=q^{r}$ does give the right side of the Andrews-Gordon 
identities \eqref{AGiden}, but not the left side. There is an extra sum 
over $s_{k+1}$, and the power of $q$ does not match. 

\begin{Andrques} \label{Qu}
Is there a simple way to understand why the choice of
$z=q^{r}$ eliminates the $s_{k+1}$ sum and replaces it with a power of $q$? 
\end{Andrques}

We now answer Andrews' question, and we will use this answer in subsequent sections. 
The ingredient we need appeared in a paper of Garrett, Ismail and Stanton, \cite{GIS}.

\begin{prop} 
\label{funceq}
For any $c$,
$$
H_{2n}(-q^c,c|q)=q^n H_{2n}(-q^{c-1},c|q).
$$
\end{prop}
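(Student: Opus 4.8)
The plan is to work directly from the definition
$$
H_{2n}(-q^c,c|q)=\sum_{s=-n}^n \qbinom{2n}{n-s} q^{cs^2}(-q^c)^s
=\sum_{s=-n}^n \qbinom{2n}{n-s} (-1)^s q^{cs^2+cs},
$$
and show that the substitution $z=-q^{c-1}$ produces the same sum times $q^{-n}$. Replacing $z=-q^c$ by $z=-q^{c-1}$ changes the summand by a factor $q^{-s}$, so the claim is equivalent to the identity
$$
\sum_{s=-n}^n \qbinom{2n}{n-s}(-1)^s q^{cs^2+cs}
= q^n\sum_{s=-n}^n \qbinom{2n}{n-s}(-1)^s q^{cs^2+(c-1)s}.
$$
The natural first step is to reindex the right-hand sum by $s\mapsto -s$ (equivalently $n-s\mapsto n+s$), using the symmetry $\qbinom{2n}{n-s}=\qbinom{2n}{n+s}$ of the Gaussian binomial coefficient. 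Under $s\mapsto -s$ the exponent $cs^2+(c-1)s$ becomes $cs^2-(c-1)s = cs^2+cs - (2c-1)s$; this does not by itself close the argument, so a single reflection is not enough.

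The key step I expect to need is a \emph{telescoping/pairing} argument rather than a term-by-term one: write $q^{cs^2+cs} = q^{c s(s+1)}$ and observe that the map $s\mapsto -s-1$ sends $s(s+1)$ to itself, so the quadratic part $q^{cs^2+cs}$ is invariant under $s\mapsto -s-1$. Thus I would split the left sum over $s$ and pair the index $s$ with $-s-1$. Under $s\mapsto -s-1$, the coefficient $\qbinom{2n}{n-s}$ becomes $\qbinom{2n}{n+s+1}$, and the difference $\qbinom{2n}{n-s}-q^{?}\qbinom{2n}{n+s+1}$ can be handled by the two Pascal recurrences
$$
\qbinom{2n}{n-s}=\qbinom{2n-1}{n-s-1}+q^{n-s}\qbinom{2n-1}{n-s},
\quad
\qbinom{2n}{n-s}=q^{n+s}\qbinom{2n-1}{n-s-1}+\qbinom{2n-1}{n-s}.
$$
Comparing these two expansions extracts exactly the factor $q^n$: indeed $\qbinom{2n}{n-s} - q^n\qbinom{2n}{n+s+1}$ should reduce, after using both recurrences and the sign $(-1)^s$ against $(-1)^{-s-1}$, to a sum that telescopes in $s$ over $-n\le s\le n-1$ and vanishes at the endpoints because $\qbinom{2n-1}{-1}=\qbinom{2n-1}{2n}=0$. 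This is essentially the mechanism behind the Garrett--Ismail--Stanton identity cited in \cite{GIS}, so I would either quote that identity directly or reproduce this short induction on $n$.

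The main obstacle is bookkeeping: getting the powers of $q$ to line up so that the post-reflection sum genuinely matches $q^n$ times the original, and confirming that the leftover terms from Pascal's recurrence cancel in pairs rather than merely reindex. A clean alternative, which I would try first if the telescoping gets messy, is induction on $n$ using Lemma~\ref{keyBrlemma} (or the Pascal recurrence for $H_{2n}$ directly): assume the functional equation for $H_{2n-2}$, expand $H_{2n}(-q^c,c|q)$ via the recurrence, apply the inductive hypothesis to the lower-order piece, and check the boundary term $s=\pm n$ separately. Either route is elementary; the only real work is the careful exponent arithmetic, which I would not grind through here.
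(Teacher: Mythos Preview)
The paper gives no proof of this proposition at all: it simply records the statement and attributes it to \cite{GIS}. Your fallback of ``quote that identity directly'' is therefore exactly what the paper does, and there is nothing further in the paper to compare against.

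As for your independent sketch: the key observation that $q^{cs(s+1)}$ is invariant under $s\mapsto -s-1$ is correct and is really all that is needed, but the Pascal/telescoping/induction apparatus you layer on top is more than required, and the unresolved exponent in $\qbinom{2n}{n-s}-q^{?}\qbinom{2n}{n+s+1}$ signals that the pairing has not yet been set up on the right object. Apply the involution to the \emph{difference} instead. Writing
\[
D \;=\; H_{2n}(-q^c,c|q)-q^{n}H_{2n}(-q^{c-1},c|q)
\;=\;\sum_{s=-n}^{n}\qbinom{2n}{n-s}(-1)^s\,q^{cs(s+1)}\bigl(1-q^{\,n-s}\bigr),
\]
the elementary relation $\qbinom{2n}{n-s}(1-q^{n-s})=\qbinom{2n}{n-s-1}(1-q^{n+s+1})$ rewrites $D$ as $\sum_s \qbinom{2n}{n-s-1}(-1)^s q^{cs(s+1)}(1-q^{n+s+1})$, and now the substitution $s\mapsto -s-1$ (which fixes $q^{cs(s+1)}$, sends $\qbinom{2n}{n-s-1}$ to $\qbinom{2n}{n+s}=\qbinom{2n}{n-s}$ and $1-q^{n+s+1}$ to $1-q^{n-s}$, and flips the sign $(-1)^s$) returns $-D$. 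Hence $D=-D$, so $D=0$, with no induction and no boundary bookkeeping.
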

 
To answer Andrews' question, start with $H_{2n}(-q^{r+1/2}, k+3/2 |q)$, apply 
Lemma~\ref{keyBrlemma} $k-r+1$ times to obtain $H_{2s_{k-r+1}}(-q^{r+1/2}, r+1/2 |q)$.
Next apply Proposition~\ref{funceq} once to obtain 
$q^{s_{k-r+1}} H_{2s_{k-r+1}}(-q^{r-1/2}, r+1/2 |q)$. This is the linear exponent in $q$ we need. 
The remaining exponents arise from again applying  Lemma~\ref{keyBrlemma} 
followed by Proposition~\ref{funceq}. The final sum on $s_{k+1}$ is now eliminated, because 
the final term becomes $H_{2s_{k+1}}(-q^{1/2}, 1/2 |q)=(q,1;q)_{s_{k+1}}$, which forces $s_{k+1}=0.$
 
\section{New Andrews-Gordon identities}

In this section we prove two new Andrews-Gordon identities for odd moduli. 
The first has binomial factors.

\begin{thm} 
\label{firstnewthm}
For $0\le j,r\le k,$ and $j+r\le k,$
$$
\begin{aligned}
\sum_{ s_1\ge s_2\ge\cdots\ge s_{k}\ge 0}&
\frac{q^{-s_1-\cdots -s_j} (1+q^{s_1+s_2})(1+q^{s_2+s_3})\cdots (1+q^{s_{j-1}+s_j})}
{(q)_{s_1-s_2}\cdots (q)_{s_{k-1}-s_{k}}(q)_{s_{k}}}\\
&\times 
q^{s_{1}^2+\cdots +s_k^2+s_{k-r+1}+\cdots +s_k}\\
&=
\sum_{s=0}^j \binom{j}{s} 
\frac{(q^{k+1-r+j-2s},q^{k+2+r-j+2s},q^{2k+3};q^{2k+3})_\infty}{(q;q)_\infty}.
\end{aligned}
$$
Moreover, the $j$ factors of  $q^{-s_1}$ and  $q^{-s_i}(1+q^{s_{i-1}+s_i})$, 
$2\le i\le j$ may be 
replaced by any $j$-element subset of $\{q^{-s_1}$, $q^{-s_i}(1+q^{s_{i-1}+s_i})\}$, 
$2\le i\le k-r$.
\end{thm}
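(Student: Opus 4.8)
The plan is to reduce Theorem~\ref{firstnewthm} to a combination of Bressoud's identity (Theorem~\ref{iterateBress}) and its corollary, by recognizing the left-hand side as a specialization of the $H_{2n}$ machinery and tracking carefully how the linear-exponent factors are produced. Concretely, I would start from $H_{2n}(-q^{r+1/2}, k+3/2\,|\,q)$ and apply Lemma~\ref{keyBrlemma} repeatedly, exactly as in the answer to Andrews' Question, but this time I would \emph{not} always follow an application of Lemma~\ref{keyBrlemma} by Proposition~\ref{funceq} at every step. The idea is that Proposition~\ref{funceq}, $H_{2n}(-q^c,c|q)=q^n H_{2n}(-q^{c-1},c|q)$, is what converts a quadratic level into the linear exponent $q^{s_i}$; if at some levels we instead use the \emph{two-term} recurrence coming from the symmetry $H_{2n}(-q^c,c|q)$ versus $H_{2n}(-q^{c}q^{\pm 1},c|q)$, or more precisely the identity expressing $H_{2n}(z,a|q)$ at $z=q^{\text{(something)}}$ as a combination, we should pick up the factors $(1+q^{s_{i-1}+s_i})$ and the negative linear exponents $q^{-s_i}$ together with the binomial coefficient $\binom{j}{s}$ after summing.

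The key technical input I expect to need is a companion to Proposition~\ref{funceq}: a relation of the form $H_{2n}(-q^{c-1},c|q) = q^{-1}H_{2n}(-q^{c},c|q) + (\text{correction})$, or equivalently a formula for $H_{2n}(-q^{c-1},c|q)$ and $H_{2n}(-q^{c},c|q)$ that, when one is substituted for the other inside the iterated sum of the Proposition following Lemma~\ref{keyBrlemma}, produces precisely the factor $q^{-s_i}(1+q^{s_{i-1}+s_i})$. Since $H_{2n}(z,a|q)=\sum_{s} \qbinom{2n}{n-s}{} q^{as^2}z^s$, the quantity $H_{2n}(-q^{c-1},c|q)+H_{2n}(-q^{c+1},c|q)$ or a similar symmetric combination should be expressible via the $z\to q^{\pm1}z$ symmetry of the $q$-binomial sum, and this is where the $(1+q^{s_{i-1}+s_i})$ bilinear factors come from as we telescope down the chain $s_1\ge s_2\ge\cdots$. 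The binomial coefficient $\binom{j}{s}$ then arises simply because, over the $j$ levels where we insert the two-term step, choosing $s$ of the $j$ levels to take the ``$+1$'' branch and $j-s$ to take the other branch shifts the eventual theta-function argument by $2s$, exactly matching the right-hand side $\sum_{s=0}^j \binom{j}{s}(\cdots q^{k+1-r+j-2s},\cdots)$.

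After establishing the polynomial (finite $n$) identity for $H_{2n}$, I would take the $n\to\infty$ limit using \eqref{Hlimit} and the Jacobi Triple Product, just as Corollary~\ref{inftylimit} follows from Theorem~\ref{iterateBress}; the constraint $s_{k+1}=0$ that kills the last sum comes, as before, from the terminal factor $H_{2s_{k+1}}(-q^{1/2},1/2|q)=(q,1;q)_{s_{k+1}}$, which explains why the final sum runs only over $s_1\ge\cdots\ge s_k\ge 0$. The flexibility statement at the end of the theorem — that the $j$ factors may be placed at any $j$-element subset of positions $\{1,2,\dots,k-r\}$ — should be immediate from this approach, because the two-term step of the previous paragraph can be inserted at \emph{any} level of the iteration of Lemma~\ref{keyBrlemma} before we reach the level $k-r+1$ where Proposition~\ref{funceq} is used to create the required $q^{s_{k-r+1}}$; the levels are interchangeable since Lemma~\ref{keyBrlemma} has the same shape at each step.

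The main obstacle I anticipate is pinning down the exact two-term identity for $H_{2n}$ that yields the factor $q^{-s_i}(1+q^{s_{i-1}+s_i})$ with the correct normalization and the correct shift of the modular parameter. Proposition~\ref{funceq} handles a single $\pm 1$ shift cleanly, but here we need a version that, composed with Lemma~\ref{keyBrlemma}, leaves behind a \emph{sum} of two monomials in $q^{s_{i-1}},q^{s_i}$ rather than a single monomial, and one must verify that the bilinear form $(1+q^{s_{i-1}+s_i})$ (rather than, say, $(1+q^{2s_i})$ or a one-variable factor) is what actually emerges — this requires looking one level up in the $s$-chain, so the bookkeeping couples adjacent summation variables. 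Verifying this coupling is consistent all the way down the chain, and that the accumulated power of $q$ and the accumulated binomial weight match the claimed right-hand side, is the crux of the argument; everything else is a routine iteration plus the standard limiting step.
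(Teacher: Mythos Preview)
Your overall instinct is right—the missing ingredient is a two-term expansion producing the coupled factor $q^{-s_i}(1+q^{s_{i-1}+s_i})$—but the architecture you describe has a real gap. You propose to start from the single polynomial $H_{2n}(-q^{r+1/2},k+3/2\,|\,q)$ and insert a two-branch step at $j$ of the levels. This cannot succeed as stated: by \eqref{Hlimit} the $n\to\infty$ limit of a single $H_{2n}$ is a \emph{single} triple product, so no same-level ``companion to Proposition~\ref{funceq}'' applied after Lemma~\ref{keyBrlemma} will manufacture the binomial sum of $j{+}1$ products on the right-hand side. In the paper the correct starting object is instead the combination $F_n^{(j)}(z,a)$, defined by $F^{(0)}=H$ and $F^{(j+1)}(z,a)=F^{(j)}(zq,a)+F^{(j)}(q/z,a)$; Proposition~\ref{recurseF} shows $F^{(j)}$ is already a $\binom{j-1}{s}$-weighted sum of shifted $H$'s, and Proposition~\ref{Flimit} identifies $\lim_{n\to\infty}F_n^{(j)}(-q^{r+1/2},k+3/2)$ with the right-hand side of the theorem. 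The branching picture you sketch for the binomial coefficients is exactly this recursion, but it lives on the $F$-side, not inside an iteration that begins with one $H$.

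The second point is the precise shape of the two-term step. It is not something applied \emph{after} Lemma~\ref{keyBrlemma}; it is a \emph{replacement} for it (Proposition~\ref{newprop}):
\[
\frac{H_{2n}(zq,a+1\,|\,q)+H_{2n}(q/z,a+1\,|\,q)}{(q)_{2n}}
\;=\;\sum_{s=0}^{n}\frac{q^{s^{2}-s}\,(1+q^{\,n+s})}{(q)_{n-s}}\,
\frac{H_{2s}(z,a\,|\,q)}{(q)_{2s}}.
\]
Here the level drops by one and the input is already the two-term sum (your guess $H(-q^{c-1},c)+H(-q^{c+1},c)$ is this left side, via $H(q/z)=H(z/q)$). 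Iterating $n\mapsto s_1\mapsto s_2\mapsto\cdots$ gives precisely the coupled factors $q^{-s_t}(1+q^{s_{t-1}+s_t})$, and the limit $(1+q^{n+s_1})\to 1$ explains why the $i=1$ factor carries no $(1+q^{\cdots})$. With this in hand your explanation of the flexibility clause is correct: since $F^{(j)}$ also obeys the plain recursion of Lemma~\ref{keyBrlemma} (Proposition~\ref{Fsum}), the $j$ uses of Proposition~\ref{newprop2} may be interleaved with ordinary Bressoud steps in any order among the first $k-r$ levels, before the final $r$ applications of Proposition~\ref{funceq}, which require $F^{(0)}=H$ and the special choice of $z$.
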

\vskip10pt\noindent

For example, the binomial factors could occur as the last $j$ of the first $k-r$ summation indices instead of the first $j$ 
indices, namely
$$
\prod_{t=0}^{j-1} q^{-s_{k-r-t}}(1+q^{s_{k-r-1-t}+s_{k-r-t}}).
$$
\vskip10pt
A corollary of Theorem~\ref{firstnewthm} is an identity which contains the Andrews-Gordon identities \eqref{AGiden} 
when $j=0$ and Bressoud's identities \eqref{Bress1} when $r=0$.

\begin{thm} 
\label{secondnewthm}
For $0\le j,r\le k,$ and $j+r\le k,$
$$
\begin{aligned}
\sum_{ s_1\ge s_2\ge\cdots\ge s_{k}\ge 0}&
\frac{q^{s_1^2+\cdots +s_k^2-(s_1+\cdots +s_j)+(s_{k-r+1}+\cdots +s_k)} }
{(q)_{s_1-s_2}\cdots (q)_{s_{k-1}-s_{k}}(q)_{s_{k}}}\\
=&
\sum_{s=0}^j  
\frac{(q^{k+1-r+j-2s},q^{k+2+r-j+2s},q^{2k+3};q^{2k+3})_\infty}{(q;q)_\infty}.
\end{aligned}
$$
\end{thm}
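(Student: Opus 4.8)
The plan is to derive Theorem~\ref{secondnewthm} as a corollary of Theorem~\ref{firstnewthm}, since the two statements have the identical right-hand side; only the left-hand sides differ. In Theorem~\ref{firstnewthm} the summand carries the factor
$$
q^{-s_1-\cdots-s_j}(1+q^{s_1+s_2})(1+q^{s_2+s_3})\cdots(1+q^{s_{j-1}+s_j}),
$$
whereas Theorem~\ref{secondnewthm} has instead the clean factor $q^{-(s_1+\cdots+s_j)}$. So the whole content of the reduction is to show that, after multiplying by the rest of the summand and summing over $s_1\ge\cdots\ge s_k\ge 0$, these two factors produce the same total. Equivalently, the ``extra'' terms generated by expanding the product of the $j$ binomials $(1+q^{s_{i-1}+s_i})$ must cancel in pairs or telescope away.

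The key step is to expand the product $\prod_{i=2}^{j}(1+q^{s_{i-1}+s_i})$ into $2^{j-1}$ monomials indexed by subsets $T\subseteq\{2,\dots,j\}$, each contributing $q^{\sum_{i\in T}(s_{i-1}+s_i)}$. I would then combine the linear exponent with the fixed $q^{-(s_1+\cdots+s_j)}$ and the quadratic exponent $q^{s_1^2+\cdots+s_k^2+s_{k-r+1}+\cdots+s_k}$ already present, and exhibit an involution on the index set that pairs subsets $T$ with opposite-sign contributions. The natural candidate is a sign-reversing involution obtained from a shift in one of the summation variables: for a term where, say, a particular adjacent pair $(s_{i-1},s_i)$ is ``selected'' by $T$, one completes the square in $s_{i-1}$ or $s_i$ and matches it against the term where that pair is not selected but a neighboring variable is shifted by one. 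Because of the half-integer shifts built into Bressoud's polynomials $H_{2n}$ — recall \eqref{speciala} and Proposition~\ref{funceq}, where exactly such $q^{s}$ versus $(1+q^{\cdots})$ trade-offs occur — I expect the cancellation to be an exact combinatorial reflection of the identity $H_{2n}(-q^c,c|q)=q^nH_{2n}(-q^{c-1},c|q)$ applied repeatedly. In other words, the binomial factors in Theorem~\ref{firstnewthm} are precisely the ``un-collapsed'' form of the linear $q$-power, and collapsing them is the same maneuver used to answer Andrews' question in \S2.

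Concretely I would proceed as follows. First, re-derive Theorem~\ref{firstnewthm}'s left side at the level of the $H$-polynomials: track which applications of Lemma~\ref{keyBrlemma} and Proposition~\ref{funceq} introduce the factors $q^{-s_i}(1+q^{s_{i-1}+s_i})$, and observe that Proposition~\ref{funceq} can instead be written so as to contribute simply $q^{s_i}$ (a single monomial) at the cost of changing the second argument of $H$ by a full unit rather than producing a symmetric Laurent-type factor. Second, show that both bookkeeping choices lead, after the $n\to\infty$ limit via \eqref{Hlimit} and the Jacobi triple product, to the same infinite-product right side — this is automatic because the right side depends only on the final specialization, not on the intermediate route. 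Third, read off that the two left sides must therefore be equal termwise as $q$-series, which is exactly the statement that Theorem~\ref{secondnewthm} follows from Theorem~\ref{firstnewthm}. The main obstacle will be step one: making the trade between ``$q^{s_i}$ from one use of Proposition~\ref{funceq}'' and ``$q^{-s_i}(1+q^{s_{i-1}+s_i})$'' completely rigorous at the level of summands, since it requires carefully matching the ranges of the nested summation variables $s_1\ge\cdots\ge s_k$ and verifying that no boundary terms are lost when the index $s_{k+1}$ is forced to $0$. If a direct $H$-polynomial argument proves awkward, the fallback is the purely formal route: expand the $2^{j-1}$ monomials, complete squares, and exhibit the sign-reversing involution on subsets $T$ explicitly, checking that its unique fixed-point configuration reproduces the $q^{-(s_1+\cdots+s_j)}$ term.
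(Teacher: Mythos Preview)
Your plan rests on a misreading: the right-hand sides of Theorem~\ref{firstnewthm} and Theorem~\ref{secondnewthm} are \emph{not} identical. Theorem~\ref{firstnewthm} has
\[
\sum_{s=0}^{j}\binom{j}{s}\,\frac{(q^{k+1-r+j-2s},q^{k+2+r-j+2s},q^{2k+3};q^{2k+3})_\infty}{(q;q)_\infty},
\]
while Theorem~\ref{secondnewthm} has the same sum with all binomial weights replaced by $1$. Consequently the two left-hand sides are \emph{not} equal either (for $j\ge 2$ the difference is visibly nonzero: e.g.\ at $j=2$ one has $q^{-s_1-s_2}(1+q^{s_1+s_2})=q^{-s_1-s_2}+1$, so the extra ``$+1$'' term does not cancel). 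Your proposed involution cannot collapse the binomial factors to the single monomial $q^{-s_1-\cdots-s_j}$ inside a \emph{single} instance of Theorem~\ref{firstnewthm}; the surplus terms do not telescope away, they genuinely survive and account for the binomial coefficients on the product side.

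What the paper does instead is take a \emph{signed linear combination of several instances} of Theorem~\ref{firstnewthm}, exploiting the ``moreover'' clause that lets the $j$ binomial factors sit on any subset of positions among $1,\dots,k-r$. The combinatorial identity that organizes this (Lemma~\ref{edgelemma}) expresses $q^{-s_1-\cdots-s_j}$ as an alternating sum over non-overlapping edge sets of the path $1\text{--}2\text{--}\cdots\text{--}j$, each edge set contributing an instance of Theorem~\ref{firstnewthm} with $j-2t$ factors (where $t=|E|$). On the product side this produces
\[
\sum_{t\ge 0}(-1)^t\binom{j-t}{t}\sum_{s=0}^{j-2t}\binom{j-2t}{s}\,(\cdots),
\]
and a Chu--Vandermonde evaluation shows the coefficient of each individual product collapses to $1$. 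So the missing idea in your proposal is precisely this: you must combine instances of Theorem~\ref{firstnewthm} with \emph{different} numbers of binomial factors (namely $j,\,j-2,\,j-4,\dots$), not try to simplify a single instance.
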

\vskip10pt\noindent

We need a new fact about the Laurent polynomials $H_{2n}(z,a |q).$ 

\begin{prop} 
\label{newprop}
For a non-negative integer $n$,
$$
\frac{H_{2n}(zq,a+1|q)+H_{2n}(q/z,a+1|q)}{(q;q)_{2n}}=
\sum_{s=0}^n \frac{q^{s^2-s}(1+q^{n+s})}{(q;q)_{n-s} }
\frac{H_{2s}(z,a|q)}{(q;q)_{2s}},
$$
\end{prop}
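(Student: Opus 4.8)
The plan is to mimic the derivation of Lemma~\ref{keyBrlemma}, but applied to the symmetrized combination $H_{2n}(zq,a+1|q)+H_{2n}(q/z,a+1|q)$, and then to exploit the $z\mapsto 1/z$ symmetry to collapse the two pieces into a single sum with the factor $(1+q^{n+s})$. First I would recall that Lemma~\ref{keyBrlemma} says
$$
\frac{H_{2n}(w,a|q)}{(q;q)_{2n}}=\sum_{s=0}^n \frac{q^{s^2}}{(q;q)_{n-s}}\,\frac{H_{2s}(w,a-1|q)}{(q;q)_{2s}}
$$
for any $w$. Taking $w=zq$ with parameter $a+1$ gives
$$
\frac{H_{2n}(zq,a+1|q)}{(q;q)_{2n}}=\sum_{s=0}^n \frac{q^{s^2}}{(q;q)_{n-s}}\,\frac{H_{2s}(zq,a|q)}{(q;q)_{2s}},
$$
and likewise with $zq$ replaced by $q/z$. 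So the left side of Proposition~\ref{newprop} equals $\sum_{s=0}^n \frac{q^{s^2}}{(q;q)_{n-s}}\cdot\frac{H_{2s}(zq,a|q)+H_{2s}(q/z,a|q)}{(q;q)_{2s}}$. The task is therefore reduced to showing that $H_{2s}(zq,a|q)+H_{2s}(q/z,a|q)$, up to a factor of $q^{-s}(1+q^{n+s})$ and the correct power of $q$, reproduces $H_{2s}(z,a|q)$ — more precisely I want to prove the "local" identity
$$
H_{2s}(zq,a|q)+H_{2s}(q/z,a|q)=q^{-?}\,(\text{something})\,H_{2s}(z,a|q),
$$
which cannot be quite right as stated because the extra $(1+q^{n+s})$ depends on $n$. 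This signals that the collapse must instead use the index shift $n-s\mapsto n-s$ inside the sum together with a reindexing trick.

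The cleaner route is to expand everything in the defining series. Write $H_{2s}(zq,a|q)+H_{2s}(q/z,a|q)=\sum_{t=-s}^s \left[\begin{matrix}2s\\ s-t\end{matrix}\right]_q q^{at^2}(q^t z^t+q^t z^{-t})=\sum_{t=-s}^s \left[\begin{matrix}2s\\ s-t\end{matrix}\right]_q q^{at^2+t}z^t + \sum_{t=-s}^s\left[\begin{matrix}2s\\ s-t\end{matrix}\right]_q q^{at^2+t}z^{-t}$. In the second sum replace $t\mapsto -t$, using $\left[\begin{matrix}2s\\ s-t\end{matrix}\right]_q=\left[\begin{matrix}2s\\ s+t\end{matrix}\right]_q$, to get $\sum_{t=-s}^s\left[\begin{matrix}2s\\ s-t\end{matrix}\right]_q q^{at^2-t}z^{t}$. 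Hence $H_{2s}(zq,a|q)+H_{2s}(q/z,a|q)=\sum_{t=-s}^s\left[\begin{matrix}2s\\ s-t\end{matrix}\right]_q q^{at^2}(q^t+q^{-t})z^t$. I would like to compare this with $q$-shifted copies of $H_{2s}(z,a|q)$; the presence of $(q^t+q^{-t})$ rather than a pure quadratic shift is exactly what produces the $(1+q^{n+s})$ when the $s$-sum is recombined with the $\frac{q^{s^2}}{(q;q)_{n-s}}$ weight. So after substituting the series form into $\sum_{s}\frac{q^{s^2}}{(q;q)_{n-s}(q;q)_{2s}}\bigl(H_{2s}(zq,a|q)+H_{2s}(q/z,a|q)\bigr)$, I would swap the order of summation (sum over $t$ outside, $s\ge|t|$ inside), isolate the coefficient of $z^t$ on both sides, and reduce the whole proposition to a single $q$-binomial identity in $n,s,t$: namely that
$$
\sum_{s\ge |t|}\frac{q^{s^2+at^2}}{(q;q)_{n-s}(q;q)_{2s}}\left[\begin{matrix}2s\\ s-t\end{matrix}\right]_q(q^t+q^{-t})
=\sum_{s\ge|t|}\frac{q^{s^2-s+at^2}(1+q^{n+s})}{(q;q)_{n-s}(q;q)_{2s}}\left[\begin{matrix}2s\\ s-t\end{matrix}\right]_q\cdot(\text{coeff from }H_{2s}(z,a|q)).
$$
Since the right-hand side's inner object is $H_{2s}(z,a|q)$ whose $z^t$-coefficient is $\left[\begin{matrix}2s\\ s-t\end{matrix}\right]_q q^{at^2}$, this becomes a pure identity with the $q^{at^2}$ and $\left[\begin{matrix}2s\\ s-t\end{matrix}\right]_q$ factored out, i.e. I must show
$$
\frac{q^{s^2}(q^t+q^{-t})}{(q;q)_{n-s}(q;q)_{2s}}=\sum_{\text{(recombination)}}\cdots,
$$
and the cleanest way to see this is to split $(1+q^{n+s})=(1-q^{n-s})\cdot\frac{1}{1-q^{n-s}}\cdot(\cdots)$ — more usefully, to observe $\frac{1+q^{n+s}}{(q;q)_{n-s}}$ combines as a telescoping difference when $s\mapsto s-1$, using $(q;q)_{n-s+1}=(1-q^{n-s+1})(q;q)_{n-s}$.

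The honest statement of the mechanism I expect: the identity should follow by applying Lemma~\ref{keyBrlemma} once as above and then invoking the elementary recurrence $H_{2s}(zq,a|q)+H_{2s}(q/z,a|q)=q^{s}\,q^{-1}\bigl(\text{linear combo of }H_{2s}(z,a|q)\text{ and }H_{2s-2}(z,a|q)\bigr)$ — in other words a contiguous relation lowering $n$ by one in the $H$ polynomial — coming directly from the Pascal recurrence $\left[\begin{matrix}2s\\ s-t\end{matrix}\right]_q=\left[\begin{matrix}2s-1\\ s-t\end{matrix}\right]_q+q^{s-t}\left[\begin{matrix}2s-1\\ s-t-1\end{matrix}\right]_q$ applied twice. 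Then reindexing $s\mapsto s-1$ in the part carrying $H_{2s-2}$, and using $(q;q)_{n-s}\cdot(1-q^{n-s+1})^{-1}$-type manipulations, the two contributions merge into the single weight $q^{s^2-s}(1+q^{n+s})/(q;q)_{n-s}$. The main obstacle is bookkeeping: getting the powers of $q$ to line up exactly (the shift from exponent $s^2$ to $s^2-s$, and the appearance of $q^{n+s}$ rather than some other combination) requires care with the Pascal recurrence applied to $\left[\begin{matrix}2n\\ n-s\end{matrix}\right]_q$ on the \emph{left} side versus to $\left[\begin{matrix}2s\\ s-t\end{matrix}\right]_q$ on the right, and with which of the two Pascal forms (the $q^{s-t}$-weighted one or the plain one) to use at each step so that the $z\leftrightarrow 1/z$ symmetrization cancels the unwanted cross terms. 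I would do this by fixing the coefficient of $z^t$ throughout and verifying the scalar recurrence by induction on $n$, which sidesteps any global generating-function subtleties.
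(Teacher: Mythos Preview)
Your plan---apply Lemma~\ref{keyBrlemma} once to drop $a+1$ to $a$, then express $H_{2s}(zq,a|q)+H_{2s}(q/z,a|q)$ via a two-term contiguous relation in $H_{2s}$ and $H_{2s-2}$, reindex, and merge---is sound and does yield a complete proof.  It is \emph{not} the paper's argument, and your write-up stops short of the one computation that makes it go through.

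The contiguous relation you need (and never state explicitly) is
$$
H_{2s}(zq,a|q)+H_{2s}(q/z,a|q)=(q^{-s}+q^{s})\,H_{2s}(z,a|q)-q^{-s}(1-q^{2s})(1-q^{2s-1})\,H_{2s-2}(z,a|q),
$$
which follows from the one-line check
$(q^t+q^{-t})\gauss{2s}{s-t}=(q^{-s}+q^s)\gauss{2s}{s-t}-q^{-s}(1-q^{2s})(1-q^{2s-1})\gauss{2s-2}{s-1-t}$.
Substituting into $\sum_s \frac{q^{s^2}}{(q)_{n-s}(q)_{2s}}\bigl(H_{2s}(zq,a)+H_{2s}(q/z,a)\bigr)$ and shifting $s\mapsto s+1$ in the $H_{2s-2}$-part, the coefficient of $H_{2s}(z,a|q)/(q)_{2s}$ becomes
$$
\frac{q^{s^2-s}(1+q^{2s})}{(q)_{n-s}}-\frac{q^{s^2+s}(1-q^{n-s})}{(q)_{n-s}}=\frac{q^{s^2-s}(1+q^{n+s})}{(q)_{n-s}},
$$
exactly as required.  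No induction on $n$ is needed; the merge is purely algebraic.

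The paper proceeds differently: it compares coefficients of $z^k q^{ak^2}$ directly on both sides of the Proposition (without first invoking Lemma~\ref{keyBrlemma}), reducing to
$$
\gauss{2n}{n-k}\bigl(q^{k^2+k}+q^{k^2-k}\bigr)=\sum_{s=k}^n \frac{q^{s^2-s}(1+q^{n+s})}{(q)_{n-s}(q)_{2s}}(q)_{2n}\gauss{2s}{s-k},
$$
then splits $1+q^{n+s}$ into two pieces and evaluates each $s$-sum separately by $q$-Vandermonde (one gives a single product, the other a sum of two).  Your route is more self-contained---it needs only Lemma~\ref{keyBrlemma} and a Pascal-type identity, no hypergeometric summation---while the paper's route is shorter once one is willing to quote $q$-Vandermonde.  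The meandering portions of your proposal (the aborted ``local identity'', the displayed non-equation with the ``(recombination)'' placeholder, the hedge toward induction on $n$) can be replaced entirely by the three displayed lines above.
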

\begin{proof} Note that the left side of Proposition~\ref{newprop} is invariant 
under $z\to 1/z$ so it does have an expansion in terms of $H_{2s}(z,a|q)$. If the 
coefficient of $z^kq^{ak^2}$ is computed for each side, we must show
$$
\left[ \begin{matrix} 2n\\ n-k \end{matrix} \right]_q q^kq^{k^2}+ 
\left[ \begin{matrix} 2n\\ n-k \end{matrix} \right]_q q^{-k}q^{k^2}=
\sum_{s=k}^n \frac{q^{s^2-s} (1+q^{n+s})}{(q)_{2s}} \frac{(q)_{2n}}{(q)_{n-s}}
\left[ \begin{matrix} 2s\\ s-k \end{matrix} \right]_q.
$$
The $s$-sum for the term $q^{n+s}$ is summable as a 
product by a limiting case of the $q$-Vandermonde sum, see 
\cite[p. 354, (II.6)]{GR}.
The $s$-sum for the term $1$ is nearly summable, it is a sum of two products. 
Putting these terms together yields the two terms on the left side. 
The details are not given.
\end{proof}

We need some functions which generalize the $H_{2n}(z,a|q)$.
\vskip5pt
\begin{defn} For a non-negative integer $j$, let
$$
F_n^{(0)}(z,a)=H_{2n}(z,a |q), \qquad  F_n^{(j+1)}(z,a)=F_n^{(j)}(zq,a)+F_n^{(j)}(q/z,a), 
\quad j\ge 0.
$$
\end{defn}

Proposition~\ref{newprop} can be rewritten using these new functions. 
The proof is by induction on $j$. 
\begin{prop} 
\label{newprop2}
For non-negative integers $n$ and $j$,
$$
\frac{F_{n}^{(j+1)}(z,a+1)}{(q;q)_{2n}}=
\sum_{s=0}^n \frac{q^{s^2-s}(1+q^{n+s})}{(q;q)_{n-s} }
\frac{F_{s}^{(j)}(z,a)}{(q;q)_{2s}},
$$
\end{prop}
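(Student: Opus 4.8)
The plan is to prove Proposition~\ref{newprop2} by induction on $j$, using Proposition~\ref{newprop} as the base case and exploiting the fact that the recursion in Proposition~\ref{newprop} is, up to the $z \mapsto zq$ and $z \mapsto q/z$ substitutions, "transparent" to the defining operation $F_n^{(j+1)}(z,a) = F_n^{(j)}(zq,a) + F_n^{(j)}(q/z,a)$.

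First I would record the base case $j=0$: since $F_n^{(0)}(z,a) = H_{2n}(z,a|q)$ and $F_n^{(1)}(z,a) = H_{2n}(zq,a|q) + H_{2n}(q/z,a|q)$, the asserted identity for $j=0$ is exactly Proposition~\ref{newprop}. For the inductive step, assume the identity holds for some $j \ge 0$, i.e. that
$$
\frac{F_{n}^{(j+1)}(z,a+1)}{(q;q)_{2n}}=
\sum_{s=0}^n \frac{q^{s^2-s}(1+q^{n+s})}{(q;q)_{n-s} }
\frac{F_{s}^{(j)}(z,a)}{(q;q)_{2s}}.
$$
Now I would apply this identity twice, once with $z$ replaced by $zq$ and once with $z$ replaced by $q/z$, and add the two resulting equations. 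On the left the sum $F_n^{(j+1)}(zq,a+1) + F_n^{(j+1)}(q/z,a+1)$ is by definition $F_n^{(j+2)}(z,a+1)$. On the right the two sums combine termwise: for each $s$, the factor $q^{s^2-s}(1+q^{n+s})/((q;q)_{n-s}(q;q)_{2s})$ depends only on $n$ and $s$ (not on $z$), so adding produces $F_s^{(j)}(zq,a) + F_s^{(j)}(q/z,a) = F_s^{(j+1)}(z,a)$ in the numerator. This yields precisely the claimed identity with $j$ replaced by $j+1$, completing the induction.

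The step most likely to need care is the bookkeeping in the inductive step: one must check that the substitution $z \mapsto zq$, $z \mapsto q/z$ genuinely commutes with the summation — i.e. that $F_s^{(j)}$, as a Laurent polynomial in $z$, admits both substitutions and that the recursion in $F^{(j)} \to F^{(j+1)}$ is applied to the same variable $z$ appearing on the right-hand side — but since everything is a finite Laurent polynomial identity in $z$ and the coefficients $q^{s^2-s}(1+q^{n+s})/((q;q)_{n-s}(q;q)_{2s})$ are manifestly $z$-independent, there is no genuine obstacle; it is a formal manipulation. One small point worth stating explicitly is that the definition $F_n^{(j+1)}(z,a) = F_n^{(j)}(zq,a) + F_n^{(j)}(q/z,a)$ must be read with the $z$-substitution applied as the \emph{outermost} operation at each stage, so that $F_n^{(j+2)}(z,a+1)$ is obtained from $F_n^{(j+1)}(\cdot,a+1)$ by the same $zq,\,q/z$ pair; this is consistent with the definition and makes the telescoping of substitutions unambiguous.
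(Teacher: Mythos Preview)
Your proof is correct and is exactly the induction on $j$ that the paper itself indicates (the paper simply says ``the proof is by induction on $j$'' without spelling out the details). The base case $j=0$ is indeed Proposition~\ref{newprop}, and your inductive step---substituting $z\mapsto zq$ and $z\mapsto q/z$ into the $j$-case, adding, and using the $z$-independence of the summation weights---is precisely the intended argument.
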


Iterating Proposition~\ref{newprop2} is the next Proposition.
\begin{prop} 
\label{anotherFsum}
For a non-negative integer $n$ and a positive integer $j$, 
$$
\frac{F_n^{(j)}(z,a)}{(q)_{2n}}= \sum_{n\ge s_1 \ge s_2 \ge \dots \ge s_j\ge 0}
\frac{q^{s_1^2-s_1}(1+q^{n+s_1})}{(q)_{n-s_1}}
\prod_{t=2}^{j}\frac{q^{s_t^2-s_t}(1+q^{s_{t-1}+s_t})}{(q)_{s_{t-1}-s_t}}
\frac{F_{2s_j}^{(0)}(z,a-j)}{(q)_{2s_j}}.
$$
\end{prop}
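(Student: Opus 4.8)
The plan is to prove Proposition~\ref{anotherFsum} by induction on $j$, using Proposition~\ref{newprop2} as the engine that peels off one layer of the telescoping sum at a time.

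First I would dispose of the base case $j=1$. Here the claimed identity reads
$$
\frac{F_n^{(1)}(z,a)}{(q)_{2n}}=\sum_{n\ge s_1\ge 0}\frac{q^{s_1^2-s_1}(1+q^{n+s_1})}{(q)_{n-s_1}}\frac{F_{2s_1}^{(0)}(z,a-1)}{(q)_{2s_1}},
$$
which is exactly Proposition~\ref{newprop2} with $j=0$ and $a$ replaced by $a-1$ (recall $F_s^{(0)}=H_{2s}$). So the base case is immediate.

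For the inductive step, assume the formula holds for some $j\ge 1$ with all choices of $a$, and consider $F_n^{(j+1)}(z,a)$. I would apply Proposition~\ref{newprop2} (again with $a\to a-1$) to write
$$
\frac{F_n^{(j+1)}(z,a)}{(q)_{2n}}=\sum_{s_1=0}^n\frac{q^{s_1^2-s_1}(1+q^{n+s_1})}{(q)_{n-s_1}}\frac{F_{s_1}^{(j)}(z,a-1)}{(q)_{2s_1}},
$$
then substitute the induction hypothesis for $F_{s_1}^{(j)}(z,a-1)/(q)_{2s_1}$, which expands it as a sum over $s_1\ge s_2\ge\cdots\ge s_{j+1}\ge 0$ with the leading factor $q^{s_2^2-s_2}(1+q^{s_1+s_2})/(q)_{s_1-s_2}$ in place of the $n$-indexed factor, and the tail ending in $F_{2s_{j+1}}^{(0)}(z,a-1-j)=F_{2s_{j+1}}^{(0)}(z,a-(j+1))$. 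Combining the two sums and relabeling merges the $s_1$-layer from Proposition~\ref{newprop2} with the $(j)$-layer from the hypothesis, and the resulting expression is precisely the claimed formula at level $j+1$, since the products of the factors $q^{s_t^2-s_t}(1+q^{s_{t-1}+s_t})/(q)_{s_{t-1}-s_t}$ for $t=2,\dots,j$ coming from the hypothesis together with the new $t=j+1$ term give $\prod_{t=2}^{j+1}$, and the first term $q^{s_1^2-s_1}(1+q^{n+s_1})/(q)_{n-s_1}$ is supplied by Proposition~\ref{newprop2}.

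The only mild subtlety — and the step I would watch most carefully — is the bookkeeping when splicing the two nested sums: one must check that the summation range $n\ge s_1$ from Proposition~\ref{newprop2} and the range $s_1\ge s_2\ge\cdots\ge s_{j+1}$ from the hypothesis concatenate correctly to give $n\ge s_1\ge\cdots\ge s_{j+1}\ge 0$ with no double counting or missing terms, and that the exponent $a-1-j$ from the hypothesis indeed equals $a-(j+1)$ as required. These are purely formal and routine, so no genuine obstacle arises; the induction closes.
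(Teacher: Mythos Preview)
Your approach is correct and is exactly what the paper does: the paper's entire proof is the one-line remark that the proposition follows by ``iterating Proposition~\ref{newprop2},'' and your induction on $j$ is precisely that iteration written out. The only cosmetic slip is in the description of which factors come from where in the inductive step (after relabeling, the hypothesis supplies the factors for $t=2,\dots,j+1$, not ``$t=2,\dots,j$ plus a new $t=j+1$ term''), but the conclusion is right and you flag the bookkeeping yourself.
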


Finally the functions $F$ also satisfy Lemma~\ref{keyBrlemma} because the $H$ functions do.

\begin{prop} 
\label{Fsum}
For a non-negative integer $n$ and a positive integer $j$, 
$$
\frac{F_{n}^{(j)}(z,a)}{(q;q)_{2n}}=\sum_{s=0}^n \frac{q^{s^2}}{(q;q)_{n-s} }
\frac{F_{s}^{(j)}(z,a-1|q)}{(q;q)_{2s}}.
$$
\end{prop}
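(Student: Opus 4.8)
The plan is to deduce Proposition~\ref{Fsum} directly from Lemma~\ref{keyBrlemma} by induction on $j$, exploiting the fact that the recursive definition of $F_n^{(j+1)}$ in terms of $F_n^{(j)}$ is built from the two substitutions $z\mapsto zq$ and $z\mapsto q/z$, neither of which interacts with the parameter $a$ or with the summation structure in $s$. First I would record the base case $j=0$: here $F_n^{(0)}(z,a)=H_{2n}(z,a|q)$, so the claimed identity is exactly Lemma~\ref{keyBrlemma}, with the convention that $F_s^{(0)}(z,a-1|q)=H_{2s}(z,a-1|q)$.

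For the inductive step, assume the identity holds for some $j\ge 0$, i.e.
$$
\frac{F_{n}^{(j)}(w,a)}{(q;q)_{2n}}=\sum_{s=0}^n \frac{q^{s^2}}{(q;q)_{n-s}}\frac{F_{s}^{(j)}(w,a-1)}{(q;q)_{2s}}
$$
for every Laurent variable $w$ and every $a$. I would apply this twice, once with $w=zq$ and once with $w=q/z$, and add the two resulting equations. Because the definition gives $F_n^{(j+1)}(z,a)=F_n^{(j)}(zq,a)+F_n^{(j)}(q/z,a)$, and likewise $F_s^{(j+1)}(z,a-1)=F_s^{(j)}(zq,a-1)+F_s^{(j)}(q/z,a-1)$, the left sides add to $F_n^{(j+1)}(z,a)/(q;q)_{2n}$ and the $s$-sums on the right, being term-by-term linear in the $F^{(j)}$ values, add to $\sum_{s=0}^n \frac{q^{s^2}}{(q;q)_{n-s}}\frac{F_s^{(j+1)}(z,a-1)}{(q;q)_{2s}}$. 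This is precisely the assertion for $j+1$, completing the induction.

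There is really no hard step here: the only thing to check is that the operations "substitute $z\mapsto zq$ or $z\mapsto q/z$" and "sum over $s$ with coefficients $q^{s^2}/((q;q)_{n-s}(q;q)_{2s})$" commute, which is immediate since the latter coefficients do not involve $z$. The one point deserving a sentence of care is the compatibility of the shift $z\mapsto zq,\,z\mapsto q/z$ with the $a$-shift: since in Lemma~\ref{keyBrlemma} the variable $z$ is inert and only $a$ decreases by $1$, applying the substitutions after invoking the lemma yields exactly the $F^{(j+1)}$ pattern on both sides, with the same $a-1$. Thus the proof is a short formal induction, and I would present it in essentially the two displayed lines above with a remark that everything follows because the $F^{(j)}$ are built from the $H$ by $z$-substitutions that leave Lemma~\ref{keyBrlemma} intact.
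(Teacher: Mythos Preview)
Your proof is correct and is precisely the argument the paper has in mind: the paper offers only the one-line justification ``the functions $F$ also satisfy Lemma~\ref{keyBrlemma} because the $H$ functions do,'' and your induction on $j$ is the natural way to unpack that remark. The only cosmetic point is that the proposition is stated for positive $j$, so your $j=0$ case (which is literally Lemma~\ref{keyBrlemma}) serves as the anchor for the induction rather than as part of the claim itself.
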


Any of these functions may be written as a linear combination of 
$F_n^{(0)}(z,a)=H_{2n}(z,a |q)$.
\begin{prop} 
\label{recurseF}
For any non-negative integer $j$,
$$
F_n^{(j+1)}(z,a)=\sum_{s=0}^j \binom{j}{s} \left( F_n^{(0)}(zq^{j+1-2s},a)+F_n^{(0)}(q^{j+1-2s}/z,a) \right).
$$
\end{prop}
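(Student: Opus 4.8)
The plan is to prove Proposition~\ref{recurseF} by induction on $j$, unwinding the definition $F_n^{(j+1)}(z,a)=F_n^{(j)}(zq,a)+F_n^{(j)}(q/z,a)$ and tracking how the substitutions $z\mapsto zq$ and $z\mapsto q/z$ act on the arguments $zq^{j+1-2s}$ and $q^{j+1-2s}/z$ appearing in the inductive hypothesis. The base case $j=0$ is immediate: $F_n^{(1)}(z,a)=F_n^{(0)}(zq,a)+F_n^{(0)}(q/z,a)$, which is exactly the right side with $\binom{0}{0}=1$.

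For the inductive step, assume the formula for $j$ and compute $F_n^{(j+1)}(z,a)=F_n^{(j)}(zq,a)+F_n^{(j)}(q/z,a)$. First I would apply the inductive hypothesis to $F_n^{(j)}(zq,a)$, giving $\sum_{s=0}^{j-1}\binom{j-1}{s}\bigl(F_n^{(0)}(zq^{j+1-2s},a)+F_n^{(0)}(q^{j-2s}/z,a)\bigr)$; here I have used that substituting $z\mapsto zq$ sends $zq^{j-2s}$ to $zq^{j+1-2s}$ and sends $q^{j-2s}/z$ to $q^{j-1-2s}/z$, so after reindexing the second family the exponents line up as $q^{j+1-2s'}/z$ with $s'=s+1$. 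Similarly, applying the hypothesis to $F_n^{(j)}(q/z,a)$ uses the symmetry of the right side under $z\mapsto 1/z$ together with $z\mapsto zq$; concretely $F_n^{(0)}(zq^{j-2s},a)$ with $z\mapsto q/z$ becomes $F_n^{(0)}(q^{j+1-2s}/z,a)$ and $F_n^{(0)}(q^{j-2s}/z,a)$ becomes $F_n^{(0)}(zq^{2s-j+1},a)=F_n^{(0)}(zq^{j+1-2(j-s)},a)$. Collecting the two expansions and grouping by the common exponent $j+1-2s$ in the first arguments, the coefficient of $F_n^{(0)}(zq^{j+1-2s},a)+F_n^{(0)}(q^{j+1-2s}/z,a)$ becomes $\binom{j-1}{s}+\binom{j-1}{s-1}=\binom{j}{s}$ by Pascal's rule.

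The main obstacle is purely bookkeeping: one must be careful that $z\mapsto q/z$ interchanges the two families $F_n^{(0)}(zq^{m},a)$ and $F_n^{(0)}(q^{m}/z,a)$ while negating the exponent shift, and that the reindexing in each of the four resulting sums is done consistently so that the ranges of $s$ and the boundary terms ($s=0$ and $s=j$, where one of $\binom{j-1}{s}$, $\binom{j-1}{s-1}$ vanishes) combine correctly into a single sum from $s=0$ to $s=j$ with Pascal coefficients. No convergence or analytic issues arise since everything is a finite Laurent polynomial in $z$. Once the four pieces are aligned by the substitution rules above, Pascal's identity finishes the proof.
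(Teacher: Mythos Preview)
Your approach is exactly the paper's: induct on $j$, expand $F_n^{(j+1)}(z,a)=F_n^{(j)}(zq,a)+F_n^{(j)}(q/z,a)$ via the inductive hypothesis, reindex, and apply Pascal's rule. There are minor arithmetic slips in the intermediate substitutions (in the displayed sum for $F_n^{(j)}(zq,a)$ the second argument should be $q^{\,j-1-2s}/z$, and $q^{j-2s}/z$ with $z\mapsto q/z$ becomes $zq^{\,j-1-2s}$ rather than $zq^{\,2s-j+1}$), but after reindexing these yield the same contributions, so the argument stands.
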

\begin{proof} By induction on $j$ we have 
$$
\begin{aligned}
F_n^{(j+1)}(z,a)= &\sum_{s=0}^{j-1} \binom{j-1}{s} \left( F_n^{(0)}(zq^{j+1-2s},a)+F_n^{(0)}(q^{j-1-2s}/z,a) \right)\\
&+ \sum_{s=0}^{j-1} \binom{j-1}{s} \left( F_n^{(0)}(q^{j+1-2s}/z,a)+F_n^{(0)}(zq^{j-1-2s},a)\right) \\
=& \sum_{s=0}^{j} F_n^{(0)}(zq^{j+1-2s},a) \left( \binom{j-1}{s}+\binom{j-1}{s-1}\right)\\
+& \sum_{s=0}^{j} F_n^{(0)}(q^{j+1-2s}/z,a) \left( \binom{j-1}{s-1}+\binom{j-1}{s}\right)\\
=& \sum_{s=0}^{j} \binom{j}{s}\left( F_n^{(0)}(zq^{j+1-2s},a)+F_n^{(0)}(q^{j+1-2s}/z,a)\right).
\end{aligned}
$$
\end{proof}

We have two expressions for $F_n^{(j)}(z,a)$: Propositions~\ref{recurseF} and    
~\ref{anotherFsum}. The proof of Theorem~\ref{firstnewthm} uses these two 
expressions after taking a limit as $n\to\infty.$
We record the appropriate $n\to \infty$ limit of Proposition~\ref{recurseF}.

\begin{prop} 
\label{Flimit}
If $j$ is a non-negative integer, 
$$
\lim_{n\to\infty} F_n^{(j+1)}(-z,a)=\frac{1}{(q)_\infty} \sum_{s=0}^{j+1} \binom{j+1}{s} 
(q^{2a},zq^{a+j+1-2s},q^{a-j-1+2s}/z;q^{2a})_\infty.
$$
\end{prop}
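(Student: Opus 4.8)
The plan is to deduce Proposition~\ref{Flimit} directly from Proposition~\ref{recurseF} by letting $n\to\infty$ term by term. Proposition~\ref{recurseF} expresses $F_n^{(j+1)}(z,a)$ as a finite linear combination (with binomial coefficients $\binom{j}{s}$) of the basic polynomials $F_n^{(0)}(w,a)=H_{2n}(w,a|q)$, evaluated at the $2(j+1)$ monomial arguments $w=zq^{j+1-2s}$ and $w=q^{j+1-2s}/z$ for $0\le s\le j$. Since the number of summands is fixed (independent of $n$), the limit of the sum is the sum of the limits, so it suffices to know $\lim_{n\to\infty} H_{2n}(w,a|q)$ for each such $w$.

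That limiting value is exactly what \eqref{Hlimit} provides: writing the argument as $-z$ there, $\lim_{n\to\infty}H_{2n}(-z,a|q)=(q^{2a},zq^{a},q^{a}/z;q^{2a})_\infty/(q)_\infty$. Applying this with the substitution $z\mapsto zq^{j+1-2s}$ (so the argument is $-zq^{j+1-2s}$) gives $(q^{2a},zq^{a+j+1-2s},q^{a-j-1+2s}/z;q^{2a})_\infty/(q)_\infty$, and similarly with $z\mapsto q^{j+1-2s}/z$ one gets $(q^{2a},q^{a+j+1-2s}/z,zq^{a-j-1+2s};q^{2a})_\infty/(q)_\infty$, which is the same product since the two inner arguments are merely swapped. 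Therefore the two terms in the $s$-summand of Proposition~\ref{recurseF}, evaluated at $-z$, have equal limits, and each contributes $(q^{2a},zq^{a+j+1-2s},q^{a-j-1+2s}/z;q^{2a})_\infty/(q)_\infty$; the factor $2$ this would produce is absorbed by re-indexing, so I would instead keep the sum as written over $0\le s\le j$ with weight $2\binom{j}{s}$ and observe that by the symmetry $s\leftrightarrow j+1-s$ combined with $\binom{j}{s}+\binom{j}{s-1}=\binom{j+1}{s}$ this collapses to $\sum_{s=0}^{j+1}\binom{j+1}{s}(\cdots)$. (Alternatively, one notes directly that the two families of arguments $\{zq^{j+1-2s}\}$ and $\{q^{j+1-2s}/z\}$ together run over $\{zq^{j+1-2s}:0\le s\le j+1\}$ after the inner swap, reproducing the stated right-hand side with coefficients $\binom{j+1}{s}$.)

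The only genuine point requiring care — and the step I expect to be the main obstacle — is justifying the interchange of limit and sum together with the bookkeeping that turns the doubled $\binom{j}{s}$ coefficients into the single $\binom{j+1}{s}$ coefficients of the claimed formula. The interchange itself is trivial (a fixed finite sum), so the real content is the combinatorial identity repackaging the coefficients; this is the same Pascal-rule manipulation already carried out in the proof of Proposition~\ref{recurseF}, and one could even avoid redoing it by simply quoting that proof's final line and then applying \eqref{Hlimit} to each of its $2(j+1)$ terms. I would therefore write the argument in two sentences: first invoke Proposition~\ref{recurseF} at argument $-z$, then apply \eqref{Hlimit} termwise and simplify, noting the $z\leftrightarrow 1/z$ symmetry of each product makes the two halves of the sum coincide and yield the asserted expression.
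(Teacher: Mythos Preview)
Your approach is exactly the paper's: apply Proposition~\ref{recurseF} at $-z$, take the termwise limit via \eqref{Hlimit}, then combine the two families of products using Pascal's rule. The outline is correct.

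However, there is a genuine slip in the execution. You assert that the two limits in the $s$-th summand,
\[
(q^{2a},\,zq^{a+j+1-2s},\,q^{a-j-1+2s}/z;q^{2a})_\infty
\quad\text{and}\quad
(q^{2a},\,q^{a+j+1-2s}/z,\,zq^{a-j-1+2s};q^{2a})_\infty,
\]
are ``the same product since the two inner arguments are merely swapped.'' They are not: the first has inner arguments $zq^{a+j+1-2s}$ and $q^{a-j-1+2s}/z$, while the second has $q^{a+j+1-2s}/z$ and $zq^{a-j-1+2s}$; these coincide only when $z^2=1$ or $2s=j+1$. Consequently the ``weight $2\binom{j}{s}$'' expression you then manipulate is not a valid intermediate form.

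The correct bookkeeping (which your parenthetical ``alternative'' gestures toward, and which the paper carries out) is that the \emph{second} product at index $s$ equals the \emph{first} product at index $j+1-s$. Re-indexing the second sum by $s\mapsto j+1-s$ turns its coefficient $\binom{j}{s}$ into $\binom{j}{j+1-s}=\binom{j}{s-1}$, and then $\binom{j}{s}+\binom{j}{s-1}=\binom{j+1}{s}$ gives the claimed single sum over $0\le s\le j+1$. Replace your ``equal limits'' sentence with this re-indexing and the proof is complete.
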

\begin{proof}
Applying \eqref{Hlimit} and Proposition~\ref{recurseF} we have 
$$
\begin{aligned}
\lim_{n\to\infty} F_n^{(j+1)}(-z,a)=&\frac{1}{(q)_\infty} \sum_{s=0}^j \binom{j}{s}\bigl(
(q^{2a},zq^{a+j+1-2s},q^{a-j-1+2s}/z;q^{2a})_\infty\\
&+(q^{2a},q^{a+j+1-2s}/z,zq^{a-j-1+2s};q^{2a})_\infty\bigr)\\
=&\frac{1}{(q)_\infty} \sum_{s=0}^{j+1} \left(\binom{j}{s}+\binom{j}{j+1-s}\right) 
(q^{2a},zq^{a+j+1-2s},q^{a-j-1+2s}/z;q^{2a})_\infty\\
=&\frac{1}{(q)_\infty} \sum_{s=0}^{j+1} \binom{j+1}{s} 
(q^{2a},zq^{a+j+1-2s},q^{a-j-1+2s}/z;q^{2a})_\infty.
\end{aligned}
$$
\end{proof}

\vskip10pt
\begin{proof}[Proof of Theorem~\ref{firstnewthm}] We see from 
Proposition~\ref{Flimit} that the right side of Theorem~\ref{firstnewthm} is
$$
\lim_{n\to\infty} F_n^{(j)}(-z,k+3/2), \qquad z=q^{-1/2-r}.
$$
or at $z=q^{r+1/2}$ since all functions are symmetric under $z\to 1/z.$
We apply Proposition~\ref{Fsum} to obtain $j$ sums and a factor of 
$$
\frac{F_{2s_j}^{(0)}(-z,k+3/2-j)}{(q)_{2s_j}}=\frac{H_{2s_j}(-z,k+3/2-j|q)}{(q)_{2s_j}}.
$$ 
Now we are in the realm of the 
Andrews-Gordon proof in section 2. We finish the proof as before, by 
applying Lemma~\ref{keyBrlemma} $k-r$ times, and then inserting 
the linear factors $r$ times. 

Since the functions $F_n^{(j)}(z,a)$ also satisfy Proposition~\ref{Fsum}, we could 
apply Proposition~\ref{Fsum} anytime before we use 
Proposition~\ref{newprop2} in the first $k-r$
iterates. This gives the arbitrary choice of the binomials. 
\end{proof}

Next we derive Theorem~\ref{secondnewthm} from Theorem~\ref{firstnewthm}. The idea is take an appropriate linear combination of Theorem~\ref{firstnewthm} to replace the binomial 
factors in  Theorem~\ref{firstnewthm} by a single term $q^{-s_1-s_2-\dots -s_j}.$ 
For example if $j=3$,
\begin{equation}
\label{weirdedges}
q^{-s_1-s_2-s_3}(1+q^{s_1+s_2})(1+q^{s_2+s_3})-q^{-s_3}(1+q^{s_2+s_3})-q^{-s_1}=
q^{-s_1-s_2-s_3}
\end{equation}
yields, for the right side of Theorem~\ref{firstnewthm},
$$
\begin{aligned}
&\sum_{s=0}^3 \binom{3}{s} 
\frac{(q^{k+1-r+3-2s},q^{k+2+r-3+2s},q^{2k+3};q^{2k+3})_\infty}{(q;q)_\infty}\\
&-2\sum_{s=0}^1 \binom{1}{s} 
\frac{(q^{k+1-r+1-2s},q^{k+2+r-1+2s},q^{2k+3};q^{2k+3})_\infty}{(q;q)_\infty}\\
=&\sum_{s=0}^3  
\frac{(q^{k+1-r+1-2s},q^{k+2+r-1+2s},q^{2k+3};q^{2k+3})_\infty}{(q;q)_\infty}
\end{aligned}
$$
as predicted by Theorem~\ref{secondnewthm}.

The version of \eqref{weirdedges} we need for general $j$ uses edges in a graph which 
is a path from $1$ to $j$: $1-2-3-\dots-j.$ A pair of edges in this 
graph do not overlap if they do not share a vertex. For a set $E$ of non-overlapping edges let
$$
wt(E)=\prod_{i\notin E, i\ge 2} q^{-s_i}(1+q^{s_{i-1}+s_i}) \times 
\begin{cases} q^{-s_1} \quad {\text{if }} 1\notin E\\
1 \quad {\text{if }} 1\in E.
\end{cases}
$$
Here are the three possible sets of non-overlapping edges $E$ for $j=3$,
$$
\begin{aligned}
E=&\varnothing , \quad  wt(E)= q^{-s_1-s_2-s_3}(1+q^{s_1+s_2})(1+q^{s_2+s_3}),\\
E=&1-2, \quad  wt(E)=q^{-s_3}(1+q^{s_2+s_3}),\\
E=&2-3, \quad  wt(E)=q^{-s_1}.
\end{aligned}
$$ 
These are the three terms in \eqref{weirdedges}.

\begin{lem} 
\label{edgelemma}
We have
$$
\sum_{E} (-1)^{|E|} wt(E)=q^{-s_1-s_2-\dots -s_j},
$$
where the sum is over all non-overlapping edge sets $E$ of $1-2-3-\dots-j.$
\end{lem}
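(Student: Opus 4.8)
The plan is to prove the identity by induction on $j$, peeling off the vertex $j$ from the path $1-2-\dots-j$. Write $P_j = q^{-s_1-\cdots-s_j}$ for the target. Every non-overlapping edge set $E$ of the path on $\{1,\dots,j\}$ falls into exactly one of two classes: either $j\notin E$, in which case $E$ is a non-overlapping edge set of the sub-path $1-2-\dots-(j-1)$ and $wt(E)$ picks up an extra factor $q^{-s_j}(1+q^{s_{j-1}+s_j})$ relative to its weight in the smaller path; or the edge $(j-1)-j$ lies in $E$, in which case $E\setminus\{(j-1)-j\}$ is a non-overlapping edge set of the sub-path $1-2-\dots-(j-2)$, it contributes an extra sign $(-1)$, and $wt(E)$ relative to the $(j-2)$-path weight loses the factor $q^{-s_{j-1}}(1+q^{s_{j-2}+s_{j-1}})$ that vertex $j-1$ would have contributed and gains nothing for vertex $j$ (since $j\in E$ means $wt$ has no factor for $j$), i.e.\ it is exactly the smaller weight with the factor for $j-1$ suppressed.

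Carrying this out, the inductive hypothesis $\sum_E (-1)^{|E|} wt(E) = P_i$ for the path on $i<j$ vertices gives
$$
\sum_{E} (-1)^{|E|} wt(E) = q^{-s_j}(1+q^{s_{j-1}+s_j})\, P_{j-1} \;-\; P_{j-2},
$$
where I must be slightly careful about the second term: splitting off the edge $(j-1)-j$ removes vertex $j-1$ from playing its usual role, so what multiplies the $(j-2)$-path sum is not quite $P_{j-2}$ but $P_{j-2}$ with the understanding that the vertices $1,\dots,j-2$ still carry their weights — which is precisely $P_{j-2}$ by the induction hypothesis, since no factor for $j-1$ appears. Then
$$
q^{-s_j}(1+q^{s_{j-1}+s_j})\, P_{j-1} - P_{j-2}
= (q^{-s_j}+q^{s_{j-1}})\, q^{-s_1-\cdots-s_{j-1}} - q^{-s_1-\cdots-s_{j-2}},
$$
and since $q^{s_{j-1}} q^{-s_1-\cdots-s_{j-1}} = q^{-s_1-\cdots-s_{j-2}}$, the two offending terms cancel and the sum collapses to $q^{-s_1-\cdots-s_j} = P_j$, completing the induction. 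The base cases $j=1$ (only $E=\varnothing$, $wt=q^{-s_1}$) and $j=2$ ($E=\varnothing$ giving $q^{-s_1-s_2}(1+q^{s_1+s_2})$, and $E=\{1-2\}$ giving $-1$, summing to $q^{-s_1-s_2}$) are immediate.

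The main obstacle — really the only subtle point — is bookkeeping the weight factors correctly when the edge incident to the last vertex is deleted: one has to verify that removing edge $(j-1)-j$ from $E$ leaves a configuration on $1-2-\dots-(j-2)$ whose weight is exactly the $(j-2)$-path weight, with no stray factor for vertex $j-1$, and that the overall sign flips by exactly one. Once that is pinned down, the recursion $\sum_E(-1)^{|E|}wt(E) = q^{-s_j}(1+q^{s_{j-1}+s_j})\,\bigl(\sum \text{on }1\!-\!\cdots\!-\!(j\!-\!1)\bigr) - \bigl(\sum\text{ on }1\!-\!\cdots\!-\!(j\!-\!2)\bigr)$ is exactly the same recursion satisfied by $P_j$, so the result follows. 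This is essentially the transfer-matrix / broken-circuit identity for matchings of a path, specialized to these weights.
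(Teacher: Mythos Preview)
Your proof is correct and follows essentially the same route as the paper's: induction on $j$, splitting the sum according to whether the final edge $(j-1)\text{--}j$ lies in $E$, and then verifying the resulting two-term recursion $q^{-s_j}(1+q^{s_{j-1}+s_j})P_{j-1}-P_{j-2}=P_j$. The paper phrases the induction as passing from $j$ to $j+1$ rather than from $j-1,j-2$ to $j$, but the decomposition and the key algebraic cancellation are identical.
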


\begin{proof} Again we do an induction on $j$. Suppose $E$ is a set of 
non-overlapping edges for $1-2-3-\dots-(j+1).$ If the last edge $j-(j+1)$ 
is in $E$, 
the remaining edges are non-overlapping for $j-1$, so by induction
$$
\sum_{E, j-(j+1)\in E} (-1)^{|E|} wt(E)=- q^{-s_1-s_2-\dots -s_{j-1}}.
$$
If the last edge $j-(j+1)$ 
is not in $E$, 
the remaining edges are non-overlapping for $j$, so by induction
$$
\sum_{E, j-(j+1)\notin E} (-1)^{|E|} wt(E)=q^{-s_{j+1}}(1+q^{{s_j}+s_{j+1}}) q^{-s_1-s_2-\dots -s_{j}}.
$$
Because
$$
- q^{-s_1-s_2-\dots -s_{j-1}}+q^{-s_{j+1}}(1+q^{{s_j}+s_{j+1}}) q^{-s_1-s_2-\dots -s_{j}}=
q^{-s_1-s_2-\dots -s_{j+1}}
$$
we are done. 
\end{proof}

\begin{proof}[Proof of Theorem~\ref{secondnewthm}] It remains to show that 
the linear combination given by Lemma~\ref{edgelemma} gives the correct constants 
for the infinite products on the right side of 
Theorem~\ref{secondnewthm} (namely 1). 

There are $\binom{j-t}{t}$ such $E$ with $t$ edges, and 
therefore $j-2t$ vertices not in $E$. So the right side becomes
$$
\sum_{t=0}^{[j/2]} \binom{j-t}{t} (-1)^t
\sum_{s=0}^{j-2t} \binom{j-2t}{s}
\frac{(q^{k+1-r+j-2t-2s},q^{k+2+r-j+2t+2s},q^{2k+3};q^{2k+3})_\infty}{(q;q)_\infty}
$$
The coefficient of
$$
\frac{(q^{k+1-r+j-2u},q^{k+2+r-j+2u},q^{2k+3};q^{2k+3})_\infty}{(q;q)_\infty}
$$ 
for $0\le u\le j$ is
$$
\sum_{s,t,s+t=u} \binom{j-t}{t} (-1)^t \binom{j-2t}{s} =1
$$
by the terminating Chu-Vandermonde evaluation.
\end{proof}

\section{New Bressoud Identities}
The Bressoud identities for even moduli can be proven the same way. 
The only change is to replace \eqref{speciala} by
$$
\frac{H_{2s}(-1,1|q)}{(q)_{2s}}=\frac{1}{(q^2;q^2)_s}.
$$

Here we state (without proof) the analogous binomial results for even moduli.

\vskip10pt\noindent
\begin{thm} For $0\le j,r\le k,$ and $j+r\le k,$
$$
\begin{aligned}
\sum_{ s_1\ge s_2\ge\cdots\ge s_{k}\ge 0}&
\frac{q^{-s_1-\cdots -s_j} (1+q^{s_1+s_2})(1+q^{s_2+s_3})\cdots (1+q^{s_{j-1}+s_j})}
{(q)_{s_1-s_2}\cdots (q)_{s_{k-1}-s_{k}}(q^2;q^2)_{s_{k}}}\\
&\times 
q^{s_{1}^2+\cdots +s_k^2+s_{k-r+1}+\cdots +s_k}\\
&=
\sum_{s=0}^j \binom{j}{s} 
\frac{(q^{k+1-r+j-2s},q^{k+1+r-j+2s},q^{2k+2};q^{2k+2})_\infty}{(q;q)_\infty}.
\end{aligned}
$$
Moreover, the $j$ factors of  $q^{-s_1}$ and  $q^{-s_i}(1+q^{s_{i-1}+s_i})$, 
$2\le i\le j$ may be 
replaced by any $j$-element subset of $\{q^{-s_1}$, $q^{-s_i}(1+q^{s_{i-1}+s_i})$, 
$2\le i\le k-r\}$.
\end{thm}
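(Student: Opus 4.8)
The plan is to transcribe the proof of Theorem~\ref{firstnewthm} verbatim, making only the substitution announced at the start of this section: the terminal polynomial is no longer handled by \eqref{speciala} (which at the special value of $z$ forces the bottom summation index to be $0$), but by $H_{2s}(-1,1|q)/(q;q)_{2s}=1/(q^2;q^2)_s$, so the bottom index survives and produces the factor $1/(q^2;q^2)_{s_k}$ characteristic of the even moduli. Everything else is parity-free: Propositions~\ref{newprop2}, \ref{anotherFsum}, \ref{Fsum}, \ref{recurseF} and \ref{Flimit}, and the functional equation of Proposition~\ref{funceq}, are used exactly as in Section~3.

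Concretely, I would evaluate $\lim_{n\to\infty}F_n^{(j)}(-z,k+1)/(q;q)_{2n}$ at $z=q^{r}$ in two ways; note that the integer power $q^{r}$ here (versus the half-integer power $q^{r+1/2}$ in the odd case) is exactly what turns the modulus $2k+3$ into $2k+2$. On the one hand, Proposition~\ref{Flimit} (with $j$ in place of $j+1$, or \eqref{Hlimit} when $j=0$) together with $(q;q)_{2n}\to(q;q)_\infty$ gives the limit as $\frac{1}{(q;q)_\infty}\sum_{s=0}^{j}\binom{j}{s}(q^{2k+2},q^{k+1+r+j-2s},q^{k+1-r-j+2s};q^{2k+2})_\infty$; reindexing $s\mapsto j-s$ and using the symmetry of the triple product in its first two entries rewrites this as $(q;q)_\infty^{-1}$ times the right side of the theorem. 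On the other hand, I would expand the same quantity by applying Proposition~\ref{anotherFsum}, which creates the indices $s_1\ge\cdots\ge s_j$ with the factor $q^{-s_1-\cdots-s_j}(1+q^{s_1+s_2})\cdots(1+q^{s_{j-1}+s_j})$ and leaves $H_{2s_j}(-q^{r},k+1-j|q)/(q;q)_{2s_j}$, and then applying Lemma~\ref{keyBrlemma} another $k-j$ times to bring the second argument down to $1$, creating the indices $s_{j+1}\ge\cdots\ge s_k$ and ending at $H_{2s_k}(-1,1|q)/(q;q)_{2s_k}=1/(q^2;q^2)_{s_k}$. To insert the linear terms $s_{k-r+1}+\cdots+s_k$, I would apply Proposition~\ref{funceq} at the last $r$ of these layers: at layer $s_{k-r+1}$ the second argument has become $r$ while the $z$-argument is still $-q^{r}=-q^{a}$, so the functional equation applies, and each subsequent application is automatic because funceq updates the $z$-argument in step with the second argument. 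The hypothesis $j+r\le k$ is exactly the condition that layer $s_{k-r+1}$ lies beyond the $j$ layers produced by Proposition~\ref{anotherFsum}. Letting $n\to\infty$ in this second expansion (using $(q;q)_{n-s_1}\to(q;q)_\infty$ and $1+q^{n+s_1}\to1$) yields $(q;q)_\infty^{-1}$ times the left side of the theorem, which matches the first computation.

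For the last sentence of the theorem, I would argue as in the proof of Theorem~\ref{firstnewthm}: since the $F$-functions also satisfy Proposition~\ref{Fsum} (the shift $a\mapsto a-1$ with a factor $q^{s^2}$ and no binomial factor), one may freely interleave applications of Proposition~\ref{Fsum} among the applications of Proposition~\ref{newprop2}, provided this is done only among the first $k-r$ layers (before the funceq insertions begin), which is why the admissible subset is taken from the $k-r$ factors $\{q^{-s_1}\}\cup\{q^{-s_i}(1+q^{s_{i-1}+s_i}):2\le i\le k-r\}$. I do not expect any genuine obstacle: the mathematical content is already in the Propositions of Sections~2 and~3. The only point requiring care is the bookkeeping that pins down the correct starting value $a=k+1$ and specialization $z=q^{r}$ so that, after $j$ iterates of Proposition~\ref{anotherFsum}, $k-j$ iterates of Lemma~\ref{keyBrlemma}, and $r$ interleaved iterates of Proposition~\ref{funceq}, the terminal polynomial is precisely $H_{2s_k}(-1,1|q)$ rather than one that forces $s_k=0$; this is the structural distinction between the even and odd cases, and it is also where the constraint $j+r\le k$ is consumed.
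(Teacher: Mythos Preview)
Your proposal is correct and follows exactly the approach the paper indicates: it transcribes the proof of Theorem~\ref{firstnewthm} with the single change that the terminal evaluation \eqref{speciala} is replaced by $H_{2s}(-1,1|q)/(q;q)_{2s}=1/(q^2;q^2)_{s}$, using the starting data $a=k+1$ and $z=q^{r}$ in place of $a=k+3/2$ and $z=q^{r+1/2}$. (One cosmetic slip: the intermediate limit you write as $\frac{1}{(q;q)_\infty}\sum_s\binom{j}{s}(\cdots)_\infty$ should carry $\frac{1}{(q;q)_\infty^{2}}$, but since you then correctly identify it as $(q;q)_\infty^{-1}$ times the right side, the comparison with the sum side is unaffected.)
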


\vskip10pt\noindent
Again using Lemma~\ref{edgelemma} we have the version without binomial 
coefficients.

\begin{thm} 
\label{secondeventhm}
For $0\le j,r\le k,$ and $j+r\le k,$
$$
\begin{aligned}
\sum_{ s_1\ge s_2\ge\cdots\ge s_{k}\ge 0}&
\frac{q^{s_1^2+\cdots +s_k^2-(s_1+\cdots +s_j)+(s_{k-r+1}+\cdots +s_k)} }
{(q)_{s_1-s_2}\cdots (q)_{s_{k-1}-s_{k}}(q^2;q^2)_{s_{k}}}\\
=&
\sum_{s=0}^j  
\frac{(q^{k+1-r+j-2s},q^{k+1+r-j+2s},q^{2k+2};q^{2k+2})_\infty}{(q;q)_\infty}.
\end{aligned}
$$
\end{thm}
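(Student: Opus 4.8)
The plan is to mirror exactly the derivation of Theorem~\ref{secondnewthm} from Theorem~\ref{firstnewthm}, but starting from the even-modulus binomial identity (the first theorem of \S4) rather than Theorem~\ref{firstnewthm}. First I would take the linear combination of that binomial theorem dictated by Lemma~\ref{edgelemma}: since Lemma~\ref{edgelemma} is a pure algebraic identity in the $s_i$ among the edge-weights $wt(E)$, it applies verbatim here, because the summand factors $q^{-s_1}$ and $q^{-s_i}(1+q^{s_{i-1}+s_i})$ that get recombined are identical to those in the odd case — the only differences between the even and odd binomial theorems are the modulus $2k+3\rightsquigarrow 2k+2$, the exponent $k+2+r\rightsquigarrow k+1+r$ on the infinite product, and the denominator factor $(q)_{s_k}\rightsquigarrow(q^2;q^2)_{s_k}$, none of which touch the left-hand-side edge structure. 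So summing $(-1)^{|E|}wt(E)$ over non-overlapping edge sets $E$ collapses the left side of the even binomial theorem precisely to the left side of Theorem~\ref{secondeventhm}.

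Next I would check that the resulting linear combination of right-hand sides produces the infinite products in Theorem~\ref{secondeventhm} each with coefficient $1$. This is the same bookkeeping as in the proof of Theorem~\ref{secondnewthm}: there are $\binom{j-t}{t}$ edge sets $E$ with $t$ edges, each contributing sign $(-1)^t$ and leaving $j-2t$ unmatched vertices, which unfold via the $\binom{j-2t}{s}$ in the even binomial theorem into products indexed by $s=0,\dots,j-2t$. Collecting the coefficient of
$$
\frac{(q^{k+1-r+j-2u},q^{k+1+r-j+2u},q^{2k+2};q^{2k+2})_\infty}{(q;q)_\infty}
$$
for $0\le u\le j$ gives $\sum_{s+t=u}\binom{j-t}{t}(-1)^t\binom{j-2t}{s}=1$ by the same terminating Chu--Vandermonde evaluation used before. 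Thus every product on the right of Theorem~\ref{secondeventhm} appears with coefficient exactly $1$, completing the derivation.

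I do not expect any genuine obstacle here; the content is entirely ``the even case goes through the same way,'' as the paper already asserts at the top of \S4. The one point worth a sentence of care is to confirm that the even-modulus analogue of Theorem~\ref{firstnewthm} (stated as the first theorem of \S4) is genuinely available for all admissible $(j,r,k)$ with the full ``arbitrary $j$-element subset'' freedom, since Lemma~\ref{edgelemma} was phrased for the first $j$ indices $1-2-\cdots-j$ and one wants that configuration to be among the allowed ones; it is, since $j\le k-r$ is exactly the hypothesis. A secondary bit of care: the substitution producing these identities is $z=q^{r+1/2}$ into $\lim_{n\to\infty}F_n^{(j)}(-z,k+3/2)$ with $H_{2s}(-1,1|q)/(q)_{2s}=1/(q^2;q^2)_s$ replacing \eqref{speciala}, and one should note $H_{2s}(-1,1|q)=(q;q)_{2s}/(q^2;q^2)_s$ forces no constraint on $s_k$ (unlike $z=1$ in the odd case), which is why the $s_k$-sum survives with the $(q^2;q^2)_{s_k}$ denominator — consistent with the shape of the claimed identity. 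Beyond these remarks the proof is routine.
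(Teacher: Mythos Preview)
Your proposal is correct and is exactly the paper's approach: the paper's entire proof is the one-line remark ``Again using Lemma~\ref{edgelemma} we have the version without binomial coefficients,'' and you have spelled out precisely that derivation (edge-set linear combination on the left, Chu--Vandermonde bookkeeping on the right). One small slip in your tangential ``secondary bit of care'': for the even modulus the relevant parameters are $a=k+1$ and $z=q^{r}$ (so that the limit has modulus $2a=2k+2$ and the descent terminates at $H_{2s}(-1,1|q)$), not $a=k+3/2$ and $z=q^{r+1/2}$; this does not affect your actual argument, which only uses Theorem~4.1 as a black box.
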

\vskip10pt\noindent
The case $j=0$ in Theorem~\ref{secondeventhm} is \cite[(3.4), p. 15]{Bress2} 
while $r=0$ is \cite[(3.5), p. 16]{Bress2}.

\vskip10pt\noindent

\section{Overpartitions}

Finally, for completeness, we give two analogous results for overpartitions,
see \cite{And2}. Proposition~\ref{funceq}, which is used to insert linear exponents, 
requires a special choice of $z$. But in the two results of this section 
we have a general $z$, so we cannot insert the linear factors as before.

The first result is a binomial version of Corollary~\ref{inftylimit}.

\begin{thm}
\label{overpart1} 
For $0\le j\le k+1,$
$$
\begin{aligned}
\sum_{ s_1\ge s_2\ge\cdots\ge s_{k+1}\ge 0}&
\frac{q^{-s_1-\cdots-s_j} (1+q^{s_1+s_2})(1+q^{s_2+s_3})\cdots (1+q^{s_{j-1}+s_j})}
{(q)_{s_1-s_2}\cdots (q)_{s_{k}-s_{k+1}}}
\frac{(-z,-q/z;q)_{s_{k+1}}}{(q)_{2s_{k+1}}}\\
&\times 
q^{s_{1}^2+\cdots +s_{k+1}^2}\\
&=
\sum_{s=0}^j \binom{j}{s} 
\frac{(-zq^{k+1+j-2s},-q^{k+2-j+2s}/z,q^{2k+3};q^{2k+3})_\infty}{(q;q)_\infty}.
\end{aligned}
$$
Moreover, the $j$ factors of  $q^{-s_1}$ and  $q^{-s_i}(1+q^{s_{i-1}+s_i})$, 
$2\le i\le j$ may be 
replaced by  any $j$-element subset of $\{q^{-s_1}$, $q^{-s_i}(1+q^{s_{i-1}+s_i})$, 
$2\le i\le k+1\}$.
\end{thm}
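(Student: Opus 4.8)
The plan is to mimic exactly the proof of Theorem~\ref{firstnewthm}, but starting from a general $z$ and stopping one iteration earlier, since here there is no linear-exponent insertion and the bottom summand keeps the overpartition factor $(-z,-q/z;q)_{s_{k+1}}$ rather than collapsing. Concretely, I would begin with the function $F_n^{(j)}(-z,k+3/2)$ for general $z$, and identify its $n\to\infty$ limit via Proposition~\ref{Flimit}: with $a=k+3/2$ and the index shift $j\to j-1$, this limit is
$$
\frac{1}{(q)_\infty}\sum_{s=0}^{j}\binom{j}{s}(q^{2k+3},-zq^{k+2+j-2s-1},-q^{k+2-j+2s-1}/z;q^{2k+3})_\infty,
$$
which after adjusting the exponent bookkeeping is precisely the right side of Theorem~\ref{overpart1}. (One should double-check the off-by-one in the exponents $k+1+j-2s$ versus $k+2-j+2s$; the symmetry $z\to 1/z$ of all the $F$ functions means the two product factors can be swapped, so only the multiset of exponents matters.)

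For the left side, I would apply Proposition~\ref{anotherFsum} with the same $j$ to write $F_n^{(j)}(-z,k+3/2)/(q)_{2n}$ as a $j$-fold nested sum over $n\ge s_1\ge\cdots\ge s_j\ge 0$, producing the weights $q^{s_1^2-s_1}(1+q^{n+s_1})/(q)_{n-s_1}$ and $\prod_{t=2}^{j}q^{s_t^2-s_t}(1+q^{s_{t-1}+s_t})/(q)_{s_{t-1}-s_t}$, with a residual factor $F_{s_j}^{(0)}(-z,k+3/2-j)/(q)_{2s_j}=H_{2s_j}(-z,k+3/2-j|q)/(q)_{2s_j}$. At this point I would invoke Proposition~\ref{Fsum} (equivalently Lemma~\ref{keyBrlemma} applied to the $H$-part) a total of $k+1-j$ further times to lower the quadratic exponent from $k+3/2-j$ down to $3/2$, each application introducing one more index $s_t$ with weight $q^{s_t^2}/(q)_{s_{t-1}-s_t}$. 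The bottom function is then $H_{2s_{k+1}}(-z,3/2|q)/(q)_{2s_{k+1}}$, and by \eqref{speciala} with the substitution $z\mapsto -z$ (so $H_{2s}(-z,1/2|q)$-type evaluation shifted by one in $a$, or directly reading off the $q$-binomial theorem) this equals $(-z,-q/z;q)_{s_{k+1}}/(q)_{2s_{k+1}}$, the overpartition factor. Taking $n\to\infty$ removes the $(1+q^{n+s_1})$ factor — it tends to $1$ — and frees the top index, giving the stated left side. The final sentence about replacing the binomials by an arbitrary $j$-subset of the allowable positions follows verbatim as in Theorem~\ref{firstnewthm}: since the $H$-part (hence also the $F$-part) satisfies Proposition~\ref{Fsum}, one may interleave the $k+1-j$ "linear-free" iterates of Proposition~\ref{Fsum} with the $j$ "binomial" iterates of Proposition~\ref{newprop2} in any order, placing each $(1+q^{s_{i-1}+s_i})$ factor at whatever position $2\le i\le k+1$ one likes.

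The main obstacle I anticipate is purely bookkeeping: getting the exponents in the infinite product on the right side to line up correctly after the shift $j\to j-1$ in Proposition~\ref{Flimit} and after reconciling $a=k+3/2$ with the modulus $2a=2k+3$. In Theorem~\ref{firstnewthm} the same mechanism produces $q^{k+1-r+j-2s}$ and $q^{k+2+r-j+2s}$, and the extra linear-factor step shifts things; here with no linear factors and one fewer nested sum in the "$F$" block versus the "$H$" block, the net exponents should come out as $-zq^{k+1+j-2s}$ and $-q^{k+2-j+2s}/z$, but verifying this requires carefully tracking that we do $j$ applications of Proposition~\ref{newprop2} and $k+1-j$ applications of Proposition~\ref{Fsum}, for a total of $k+1$ steps lowering $a$ from $k+3/2$ to $1/2$, at which point $H_{2s}(-z,1/2|q)$ is evaluated. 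This is routine but error-prone, so I would verify it against the known $j=0$ case, where the statement must reduce exactly to Corollary~\ref{inftylimit}.
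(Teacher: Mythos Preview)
Your approach is correct and is precisely the one the paper intends: the paper does not write out a proof of Theorem~\ref{overpart1} but merely remarks that the argument is analogous to that of Theorem~\ref{firstnewthm}, with the linear-exponent insertion step omitted because $z$ is now generic. That is exactly what you do.

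The only correction needed is the bookkeeping you yourself flag. The starting function should be $F_n^{(j)}(zq^{-1/2},k+3/2)$, not $F_n^{(j)}(-z,k+3/2)$. With this choice, Proposition~\ref{Flimit} (applied with $-z$ replaced by $zq^{-1/2}$, i.e.\ the auxiliary variable equal to $-zq^{-1/2}$) gives exactly the products $(-zq^{k+1+j-2s},-q^{k+2-j+2s}/z,q^{2k+3};q^{2k+3})_\infty$ on the right side; and after the full $k+1$ iterations the parameter $a$ drops from $k+3/2$ to $1/2$ (not $3/2$), whereupon \eqref{speciala} with $-uq^{1/2}=zq^{-1/2}$ yields
\[
H_{2s_{k+1}}(zq^{-1/2},1/2\mid q)=(-z,-q/z;q)_{s_{k+1}},
\]
the overpartition factor. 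With your choice of $-z$ the bottom evaluation would instead produce $(zq^{1/2},q^{1/2}/z;q)_{s_{k+1}}$, which is not what appears in the statement. Once this substitution is made your argument goes through verbatim, and the check against $j=0$ indeed recovers Corollary~\ref{inftylimit} with $z$ replaced by $-zq^{-1/2}$.
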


\vskip10pt\noindent
\begin{thm} 
\label{overpart2}
For $0\le j\le k+1,$
$$
\begin{aligned}
\sum_{ s_1\ge s_2\ge\cdots\ge s_{k+1}\ge 0}&
\frac{q^{s_1^2-s_1+\cdots +s_j^2-s_j+s_{j+1}^2+\cdots +s_{k+1}^2}}
{(q)_{s_1-s_2}\cdots (q)_{s_{k}-s_{k+1}}}
\frac{(-z,-q/z;q)_{s_{k+1}}}{(q)_{2s_{k+1}}}\\
&=
\sum_{s=0}^j 
\frac{(-zq^{k+1+j-2s},-q^{k+2-j+2s}/z,q^{2k+3};q^{2k+3})_\infty}{(q;q)_\infty}.
\end{aligned}
$$
\end{thm}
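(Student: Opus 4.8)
The plan is to derive Theorem~\ref{overpart2} from Theorem~\ref{overpart1} in exactly the manner that Theorem~\ref{secondnewthm} was derived from Theorem~\ref{firstnewthm}, the only tool being the edge-weight identity of Lemma~\ref{edgelemma}. First I would observe that the summand of Theorem~\ref{overpart2} is obtained from that of Theorem~\ref{overpart1} by replacing the prefactor $q^{-s_1-\cdots-s_j}(1+q^{s_1+s_2})\cdots(1+q^{s_{j-1}+s_j})=wt(\varnothing)$ with the monomial $q^{-s_1-\cdots-s_j}$: indeed, multiplying $q^{s_1^2+\cdots+s_{k+1}^2}$ by $q^{-s_1-\cdots-s_j}$ produces precisely $q^{s_1^2-s_1+\cdots+s_j^2-s_j+s_{j+1}^2+\cdots+s_{k+1}^2}$. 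By Lemma~\ref{edgelemma}, $q^{-s_1-\cdots-s_j}=\sum_E(-1)^{|E|}wt(E)$, the sum ranging over non-overlapping edge sets $E$ of the path $1-2-\cdots-j$.

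Next I would note that for an $E$ with $t$ edges, the factor $wt(E)$ is a product of $j-2t$ terms, one for each vertex of the path not covered by $E$, each of the form $q^{-s_1}$ or $q^{-s_i}(1+q^{s_{i-1}+s_i})$ with $2\le i\le j$; such a collection is an admissible $(j-2t)$-element subset for the ``moreover'' clause of Theorem~\ref{overpart1}. Hence multiplying the left side of Theorem~\ref{overpart1} (with all its other factors unchanged) by $wt(E)$ produces the right side of that theorem with $j$ replaced by $j-2t$, namely
$$
\sum_{s=0}^{j-2t}\binom{j-2t}{s}\frac{(-zq^{k+1+j-2t-2s},-q^{k+2-j+2t+2s}/z,q^{2k+3};q^{2k+3})_\infty}{(q;q)_\infty}.
$$
Since there are $\binom{j-t}{t}$ edge sets with $t$ edges, the left side of Theorem~\ref{overpart2} becomes
$$
\sum_{t=0}^{\lfloor j/2\rfloor}(-1)^t\binom{j-t}{t}\sum_{s=0}^{j-2t}\binom{j-2t}{s}\frac{(-zq^{k+1+j-2t-2s},-q^{k+2-j+2t+2s}/z,q^{2k+3};q^{2k+3})_\infty}{(q;q)_\infty}.
$$

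Finally I would collect, for each $0\le u\le j$, the coefficient of $\dfrac{(-zq^{k+1+j-2u},-q^{k+2-j+2u}/z,q^{2k+3};q^{2k+3})_\infty}{(q;q)_\infty}$; writing $u=s+t$, it equals $\sum_{s+t=u}(-1)^t\binom{j-t}{t}\binom{j-2t}{s}$, which is $1$ by the terminating Chu--Vandermonde evaluation, exactly as in the proof of Theorem~\ref{secondnewthm}. This identifies the resulting sum with the right side of Theorem~\ref{overpart2}. The only point needing a moment's care --- the main obstacle, such as it is --- is verifying that the free vertices of each $E$ index a legitimate subset for the ``moreover'' clause of Theorem~\ref{overpart1} (in particular that a factor $q^{-s_i}(1+q^{s_{i-1}+s_i})$ remains admissible even when vertex $i-1$ is covered by an edge of $E$) and that the uniform shift of the product exponents by $2t$ leaves the Chu--Vandermonde cancellation intact; both are routine.
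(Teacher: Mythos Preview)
Your proposal is correct and follows exactly the approach the paper intends: Theorem~\ref{overpart2} is obtained from Theorem~\ref{overpart1} by the edge-weight identity of Lemma~\ref{edgelemma}, reducing the product side via the same Chu--Vandermonde computation used in the proof of Theorem~\ref{secondnewthm}. Your check that the free vertices of each $E$ index an admissible subset for the ``moreover'' clause (with parameter $j-2t$) is the right thing to verify and is indeed routine.
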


\vskip10pt\noindent
We mention a different expansion for the infinite product in 
Theorem~\ref{overpart2} when $j=0.$ This final result comes from a version of 
the Laurent polynomials $H_{2n}(z,a|q)$ with an odd index. We do not develop 
the corresponding results here.   

\begin{thm}
\label{overpart3} 
If $k$ is a non-negative integer, 
$$
\begin{aligned}
\sum_{ s_1\ge s_2\ge\cdots\ge s_{k+1}\ge 0}&
\frac{q^{s_1^2+\cdots +s_{k+1}^2+s_1+\dots+s_{k+1}}}
{(q)_{s_1-s_2}\cdots (q)_{s_{k}-s_{k+1}}}
\frac{(-q^{k+1}/z)_{s_{k+1}+1}(-zq^{-k})_{s_{k+1}}}{(q)_{2s_{k+1}+1}}\\
&=
\frac{(-zq^{k+1},-q^{k+2}/z,q^{2k+3};q^{2k+3})_\infty}{(q;q)_\infty}.
\end{aligned}
$$
\end{thm}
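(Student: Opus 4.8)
The plan is to mimic the structure used for Theorem~\ref{overpart2} with $j=0$, but starting from an \emph{odd-index} analogue of the Bressoud polynomials $H_{2n}(z,a|q)$. Concretely, I would introduce a Laurent polynomial
$$
H_{2n+1}(z,a|q)=\sum_{s} \qbinom{2n+1}{n-s}{q}\, q^{a(s^2+s)}\, z^{s},
$$
(or a close variant with the quadratic form $s^2+s$ shifted appropriately), designed so that its Jacobi-Triple-Product limit as $n\to\infty$ produces exactly the infinite product $(-zq^{k+1},-q^{k+2}/z,q^{2k+3};q^{2k+3})_\infty/(q;q)_\infty$ on the right-hand side, in the same way \eqref{Hlimit} does for the even-index case. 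The extra single box in the binomial coefficient is what will ultimately produce the $(q)_{2s_{k+1}+1}$ in the denominator and the shifted Pochhammer factors $(-q^{k+1}/z)_{s_{k+1}+1}(-zq^{-k})_{s_{k+1}}$ in the numerator, via a $q$-binomial-theorem factorization at the special value analogous to \eqref{speciala}.

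The key steps, in order, would be: (1) establish the $n\to\infty$ limit of $H_{2n+1}(-z,a|q)$ by the Jacobi Triple Product, checking the exponents match $q^{2k+3}$ and the linear shifts $k+1,k+2$; (2) prove the odd-index analogue of Bressoud's Lemma~\ref{keyBrlemma}, i.e. an identity of the form
$$
\frac{H_{2n+1}(z,a|q)}{(q;q)_{2n+1}}=\sum_{s=0}^{n}\frac{q^{s^2+s}}{(q;q)_{n-s}}\,\frac{H_{2s+1}(z,a-1|q)}{(q;q)_{2s+1}},
$$
whose proof is the same coefficient-extraction plus $q$-Vandermonde argument used in Proposition~\ref{newprop}; (3) iterate this lemma $k+1$ times, exactly as in the Proposition following Lemma~\ref{keyBrlemma}, to get a $(k+1)$-fold sum over $s_1\ge\cdots\ge s_{k+1}$ with a trailing factor $H_{2s_{k+1}+1}(z,a|q)/(q;q)_{2s_{k+1}+1}$ where $a$ has been decremented to the base value; (4) evaluate that base polynomial $H_{2s_{k+1}+1}(z,a_0|q)$ in closed product form by the $q$-binomial theorem — this is where the precise exponent $s^2+s$ and the half-integer choice of $a_0$ must be tuned so that the factorization yields $(-q^{k+1}/z)_{s_{k+1}+1}(-zq^{-k})_{s_{k+1}}$ rather than something that collapses the sum; (5) take $n\to\infty$ and combine (1) with (3)--(4).

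I expect the main obstacle to be step (4), pinning down the correct odd-index polynomial and its special evaluation. In the even case the miracle is \eqref{speciala}: $H_{2n}(-zq^{1/2},1/2|q)=(qz,1/z;q)_n$, which for $z=1$ forces the sum to terminate but for general $z$ gives the overpartition factor $(-z,-q/z;q)_{s_{k+1}}$. For the odd index I need an analogue producing the \emph{asymmetric} pair $(-q^{k+1}/z)_{s+1}(-zq^{-k})_s$ — note the differing lengths $s+1$ versus $s$ — so the quadratic form, the linear term in the exponent, and the sign/shift of $z$ must all be chosen in concert; getting the bookkeeping of the "$+1$" in $(q)_{2s_{k+1}+1}$ consistent through both the iteration lemma and the JTP limit is the delicate part. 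A secondary check is that the $k+1$ linear terms $s_1+\cdots+s_{k+1}$ in the exponent on the left genuinely arise from the iteration with the $q^{s^2+s}$ weights (as opposed to needing a separate insertion step à la Proposition~\ref{funceq}), which should follow automatically once the quadratic form in $H_{2n+1}$ is taken to be $s^2+s$. Since the paper explicitly declines to develop the odd-index machinery, I would present this as the natural route and note that all individual ingredients are routine variants of results already proved above.
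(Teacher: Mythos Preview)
Your proposal follows exactly the route the paper indicates: the paper states only that Theorem~\ref{overpart3} ``comes from a version of the Laurent polynomials $H_{2n}(z,a|q)$ with an odd index'' and explicitly declines to develop the machinery, so there is no detailed proof in the paper to compare against beyond that one-line hint. Your plan---define an odd-index analogue $H_{2n+1}$, prove the corresponding Bressoud iteration lemma with weight $q^{s^2+s}$ (which accounts for the linear terms $s_1+\cdots+s_{k+1}$ without needing Proposition~\ref{funceq}), factor the base polynomial via the $q$-binomial theorem to produce the asymmetric pair $(-q^{k+1}/z)_{s+1}(-zq^{-k})_{s}$ over $(q)_{2s+1}$, and take the Jacobi-triple-product limit---is precisely the intended argument, and your identification of step~(4) as the place requiring careful tuning of the quadratic form and the $z$-shift is accurate.
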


If $k=0$ in Theorems~\ref{overpart2} and \ref{overpart3}, we have the curious result
(see \cite[(5.1)]{And2})
\begin{equation}
\begin{aligned}
\frac{(-zq,-q^2/z,q^3;q^3)_\infty}{(q)_\infty}=&
\sum_{s=0}^\infty \frac{(-z,-q/z;q)_s}{(q)_{2s}}q^{s^2} \\
=&
\sum_{s=0}^\infty \frac{(-zq)_{s+1}(-1/z)_s}{(q)_{2s+1}}q^{s^2+s}. 
\end{aligned}
\end{equation}

\section{Remarks}

The Andrews-Gordon identities have combinatorial interpretations for integer partitions,  
three of which are (see \cite{And1}): 
\begin{enumerate}
\item  those with modular conditions on parts,
\item those with difference conditions on parts,
\item those with conditions on iterated Durfee squares.
\end{enumerate} 

This paper offers no insightful versions of these results for the 
binomial versions given here.

Berkovich and Paule \cite{BP1},\cite{BP2} have versions of the Andrews-Gordon identities where 
the linear forms are also modified.

Griffin, Ono, and Warnaar \cite{GOW} give new infinite families 
(e.g. \cite[Theorem 1.1]{GOW}) of Rogers-Ramanujan identities. 
See \cite[(2.7)]{GOW} for the Andrews-Gordon-Bressoud identities in their paper.

Seo and Yee \cite{SY} combinatorially study singular overpartitions, 
whose generating function is given by $j=0$ in 
Theorem~\ref{overpart1} with a special choice of $z$.


\begin{thebibliography}{1}

\bibitem{And0} G. Andrews.
\newblock An analytic generalization of the Rogers-Ramanujan identities for odd moduli. 
\newblock {\em Proc. Nat. Acad. Sci. U.S.A.}  71 (1974), 4082-4085. 

\bibitem{And1} G. Andrews.
\newblock Partitions and Durfee dissection. 
\newblock {\em Amer. J. Math.} 101 (1979), no. 3, 735-742.

\bibitem{And2} G. Andrews.
\newblock Singular overpartitions. 
\newblock {\em Int. J. Number Theory} 11 (2015), no. 5, 1523-1533. 

\bibitem{And3} G. Andrews.
\newblock Bressoud polynomials, Rogers-Ramanujan type identities, and applications. 
\newblock {\em Ramanujan Journal}, to appear, (2015).

\bibitem{BP1} A.~Berkovich and P.~Paule.
\newblock Variants of the Andrews-Gordon identities. 
\newblock {\em Ramanujan J. } 5 (2001), no. 4, 391-404.

\bibitem{BP2} A.~Berkovich and P.~Paule. 
\newblock Lattice paths, $q$-multinomials and two variants of the Andrews-Gordon identities. \newblock {\em Ramanujan J. } 5 (2001), no. 4, 409-425. 


\bibitem{Bress0} D. Bressoud. 
\newblock An analytic generalization of the Rogers-Ramanujan identities with interpretation. 
\newblock {\em Quart. J. Math. Oxford Ser. } (2) 31 (1980), no. 124, 385-399. 

\bibitem{Bress1} D. Bressoud. 
\newblock An easy proof of the Rogers-Ramanujan identities. 
\newblock {\em J. Number Theory} 16 (1983), no. 2, 235-241.


\bibitem{Bress2} D. Bressoud.
\newblock Analytic and combinatorial generalizations of the Rogers-Ramanujan identities. 
\newblock {\em Mem. Amer. Math. Soc. } 24 (1980), no. 227, 54 pp.

\bibitem{GIS} K.~Garrett, M.~E.~Ismail, and D.~Stanton.
\newblock Variants of the Rogers--Ramanujan identities.
\newblock {\em Advances in Applied Mathematics}, 23(3), (1999), 274-299.

\bibitem{GR} G.~Gasper and M.~Rahman.
\newblock Basic Hypergeometric Series.
\newblock 2nd edition, Cambridge University Press, Cambridge, (2004). 


\bibitem{GOW} M.~Griffin, K.~Ono, and S. Ole~Warnaar.
\newblock A framework of Rogers-Ramanujan identities and their arithmetic properties.
\newblock {\em Duke Math. J.} 165, no. 8 (2016), 1475-1527
 
\bibitem{SY} S.~Seo and A.~J.~Yee.
\newblock Overpartitions and singular overpartitions.
\newblock preprint, (2016).

\end{thebibliography}
\end{document}